\newcommand{\A}{\mathscr A}
\newcommand{\perpb}{\perp_B}
\newcommand{\perpm}{\perp^s_B}
\newcommand{\perpp}{\perp^s_{B^\varepsilon}}
\newcommand{\perps}{\perp^{\varepsilon}_B}
\newcommand{\perpr}{\perp^\varepsilon}
\newcommand{\la}{\langle}
\newcommand{\ra}{\rangle}
\newcommand{\K}{\mathbb{K}}
\newcommand{\B}{\mathbb{B}}
\newtheorem{theorem}{Theorem}[section]
\newtheorem{proposition}[theorem]{Proposition}
\newtheorem{corollary}[theorem]{Corollary}
\theoremstyle{definition}
\newtheorem{definition}[theorem]{Definition}
\newtheorem{example}[theorem]{Example}
\theoremstyle{remark}
\newtheorem{remark}[theorem]{Remark}
\numberwithin{equation}{section}
\begin{document}
\title[Characterizations of operator Birkhoff–-James orthogonality]{Characterizations of operator Birkhoff–-James orthogonality}

\author[M.S. Moslehian and A. Zamani]{Mohammad Sal Moslehian$^1$ and Ali Zamani$^{1,2}$}

\address{$^1$Department of Pure Mathematics, Ferdowsi University of
Mashhad, P.O. Box 1159, Mashhad 91775, Iran}
\email{moslehian@um.ac.ir}
\address{$^2$Department of Mathematics, Farhangian University, Iran}
\email{zamani.ali85@yahoo.com}

\subjclass[2010]{46L05, 46L08, 46B20}

\keywords{Hilbert $C^*$-module; Birkhoff--James orthogonality; strong Birkhoff--James orthogonality; approximate orthogonality.}

\begin{abstract}
In this paper, we obtain some characterizations of the (strong) Birkhoff--James orthogonality for elements of Hilbert $C^*$-modules
and certain elements of $\B(\mathscr{H})$.
Moreover, we obtain a kind of Pythagorean relation for bounded linear operators.
In addition, for $T\in \B(\mathscr{H})$ we prove that if the norm attaining
set $\mathbb{M}_T$ is a unit sphere of some finite dimensional
subspace $\mathscr{H}_0$ of $\mathscr{H}$ and $\|T\|_{{{\mathscr{H}}_0}^\perp} < \|T\|$, then for every $S\in\B(\mathscr{H})$, $T$ is the strong Birkhoff--James orthogonal to $S$ if and only if there exists a unit vector $\xi\in {\mathscr{H}}_0$ such that $\|T\|\xi = |T|\xi$ and $S^*T\xi = 0$.
Finally, we introduce a new type of approximate orthogonality and investigate this notion in the setting of inner product $C^*$-modules.
\end{abstract}
\maketitle
\section{Introduction and preliminaries}
Let $\B(\mathscr{H}, \mathscr{K})$ denote the linear space of all bounded linear operators
between Hilbert spaces $(\mathscr{H}, [., .])$ and $(\mathscr{K}, [., .])$. By $I$ we denote the identity
operator. When $\mathscr{H} = \mathscr{K}$, we write $\B(\mathscr{H})$ for $\B(\mathscr{H}, \mathscr{K})$. By $\K(\mathscr{H})$ we
denote the algebra of all compact operators on $\mathscr{H}$, and by $\mathcal{C}_1(\mathscr{H})$ the algebra
of all trace--class operators on $\mathscr{H}$.
Let $\mathbb{S}_{\mathscr{H}} = \{\xi\in \mathscr{H}: \, \|\xi\| = 1\}$ be the unit sphere of $\mathscr{H}$.
For $T\in \B(\mathscr{H})$, let $\mathbb{M}_T$ denote the set of all vectors in $\mathbb{S}_{\mathscr{H}}$ at which $T$ attains norm, i.e.,
$\mathbb{M}_T = \{\xi\in \mathbb{S}_{\mathscr{H}}: \, \|T\xi\| = \|T\|\}.$
For $T\in \B(\mathscr{H}, \mathscr{K})$ the symbol $m(T) : = \inf\{\|T\xi\|: \, \xi\in \mathbb{S}_{\mathscr{H}}\}$ denotes the minimum modulus of $T$.

Inner product $C^{*}$-modules generalize inner product spaces by allowing inner products to take values
in an arbitrary $C^{*}$-algebra instead of the $C^{*}$-algebra of complex numbers.

In an inner product $C^*$-module $(V, \langle\cdot,\cdot\rangle)$ over a $C^{*}$-algebra $\A$ the following Cauchy--Schwarz inequality holds (see also \cite{A.B.F.M}):
$${\langle x, y\rangle}^*\langle x, y\rangle \leq \|\langle x, x\rangle\|\langle y, y\rangle \qquad(x, y\in V).$$
Consequently, $\|x\|=\|\langle x, x\rangle\|^{\frac{1}{2}}$ defines a norm on $V$. If $V$ is complete with respect to this norm, then it is called a Hilbert $\A$-module, or a Hilbert $C^*$-module over $\A$.
Any $C^*$-algebra $\A$ can be regarded as a Hilbert $C^*$-module over itself via $\langle a, b\rangle :=a^* b$. For every $x\in V$ the positive square root of $\langle x, x\rangle$ is denoted by $|x|$.
In the case of a $C^*$-algebra, we get the usual notation $|a| = (a^*a)^{\frac{1}{2}}.$
By $\mathcal{S}(\A)$ we denote the set of all states of $\A$, that is, the set of all
positive linear functionals of $\A$ whose norm is equal to one.

Furthermore, if $\varphi \in \mathcal{S}(\A)$, then $(x, y)\mapsto\varphi(\langle x, y\rangle)$ gives rise to a usual semi-inner product on $V$, so we have the following useful Cauchy–-Schwarz inequality:
$$|\varphi(\langle x, y\rangle)|^2\leq\varphi(\langle x, x\rangle)\varphi(\langle y, y\rangle)\qquad(x, y\in V).$$

We refer the reader to \cite{dix, lan, mor} for more information on the basic theory of $C^{*}$-algebras and Hilbert $C^*$-modules.

A concept of orthogonality in a Hilbert $C^{*}$-module can be defined with respect to the $C^{*}$-
valued inner product in a natural way, that is, two elements $x$ and $y$ of a Hilbert $C^{*}$-module
$V$ over a $C^{*}$-algebra $\A$ are called orthogonal, in short $x \perp y$, if $\langle x, y\rangle = 0$.

In a normed linear space there are several notions of orthogonality, all of which are
generalizations of orthogonality in a Hilbert space.
One of the most important is concept of the Birkhoff--James orthogonality: if $x, y$ are elements of a complex
normed linear space $(X, \|\cdot\|)$, then $x$ is orthogonal to $y$ in the Birkhoff--James sense \cite{B, J}, in short $x \perpb y$, if
$$\|x + \lambda y\| \geq \|x\| \qquad (\lambda\in\mathbb{C}).$$
The central role of the Birkhoff--James orthogonality in approximation theory, typified by the fact
that $T \in\B(\mathscr{H})$ is a best approximation of $S\in\B(\mathscr{H})$ from a linear subspace $M$ of $\B(\mathscr{H})$ if and only if
$T$ is a Birkhoff--James orthogonal projection of $S$ on to $M$.
By the Hahn–-Banach theorem, if $x, y$ are two elements of a normed linear space $X$,
then $x \perpb y$ if and only if there is a norm one linear functional $f$ of $X$ such that
$f(x) = \|x\|$ and $f(y) = 0$. If we have additional structures on a normed
linear space $X$, then we obtain other characterizations of the Birkhoff–-James
orthogonality see \cite{A.R.3, B.G, G.S.P, S.P.H, Z.M.F} and the references therein.

In Section 2, we present some characterizations of the Birkhoff--James orthogonality for elements of a Hilbert $\K(\mathscr{H})$-module and elements of $\B(\mathscr{H})$. Next, we will give some applications.
In particular, for $T, S\in\B(\mathscr{H})$ with $m(S)> 0$,
we prove that there exists a unique $\gamma\in\Bbb C$ such that
$$\Big\|(T + \gamma S) + \lambda S \Big\|^2 \geq \Big\|T + \gamma S \Big\|^2 + |\lambda|^2m^2(S) \qquad (\lambda\in\Bbb C).$$
As a natural generalization of the notion of Birkhoff--James orthogonality, the concept of strong Birkhoff--James orthogonality, which involves modular structure of a Hilbert $C^{*}$-module was introduced in \cite{A.R.2}. When $x$ and $y$ are elements of a Hilbert $\A$-module $V$, $x$ is orthogonal to $y$ in the strong Birkhoff--James sense, in short $x \perpm y$, if
$$\|x + ya\| \geq \|x\| \qquad (a\in \A),$$
i.e. if the distance from $x$ to $\overline{y\A}$, the $\A$-submodule of $V$ generated by $y$, is exactly $\|x\|$.
This orthogonality is ``between'' $\perp$ and $ \perpb $, i.e.,
$$x \perp y \, \Longrightarrow \, x\perpm y \, \Longrightarrow \, x\perpb y,\qquad (x, y \in V),$$
while the converses do not hold in general (see \cite{A.R.2}). It was shown in \cite{A.R.2} that the following relation
between the strong and the classical Birkhoff–-James orthogonality is valid:
\begin{align*}
x\perpm y \Leftrightarrow  x\perpb y\la y, x\ra \qquad (x, y \in V).
\end{align*}
In particular, by \cite[Proposition 3.1]{A.R.3}, if $\la x, y\ra \geq 0$, then
\begin{align}\label{id.1.102}
x\perpm y \Leftrightarrow  x\perpb y \qquad (x, y \in V).
\end{align}
If $V$ is a full Hilbert $\A$-module, then the only
case where the orthogonalities $\perpm$ and $\perpb$ coincide is when $\A$ is isomorphic to $\mathbb{C}$
(see \cite[Theorem 3.5]{A.R.3}), while orthogonalities $\perpm$ and $\perp$ coincide only when $\A$ or
$\K(V)$ is isomorphic to $\mathbb{C}$ (see \cite[Theorems 4.7, 4.8]{A.R.3}).
Further, by \cite[Lemma 4.2]{A.R.3}, we have
\begin{align*}
x\perpb \big(\|x\|^2\,y - y\la x, x\ra\big) \qquad (x, y \in V),
\end{align*}
and
\begin{align}\label{id.1.104}
x\perpm \big(\|x\|^2x-x\la x,x\ra\big) \qquad (x\in V).
\end{align}
In Section 2, we obtain a characterization of the strong Birkhoff--James orthogonality for elements of a $C^*$-algebra.
We also present some characterizations of the strong Birkhoff--James orthogonality for certain elements of $\B(\mathscr{H}).$
In particular, for $T\in \B(\mathscr{H})$ we prove that if $\mathbb{S}_{{\mathscr{H}}_0} = \mathbb{M}_T$, where ${\mathscr{H}}_0$ is a finite dimensional
subspace of $\mathscr{H}$ and $\|T\|_{{{\mathscr{H}}_0}^\perp} < \|T\|$, then for every $S\in\B(\mathscr{H})$, $T \perpm S$ if and only if there exists a unit vector $\xi\in {\mathscr{H}}_0$ such that $\|T\|\xi = |T|\xi$ and $S^*T\xi = 0$.

For given $\varepsilon\geq0$, elements $x, y$ in an inner product $\A$-module $V$ are said to be approximately orthogonal or $\varepsilon$-orthogonal, in short $x \perpr y$, if $\|\langle x, y\rangle\|\leq \varepsilon \|x\|\|y\|$. For $\varepsilon\geq1$, it is clear that every pair of vectors are $\varepsilon$-orthogonal, so the interesting case is when $\varepsilon\in[0, 1)$.

In an arbitrary normed space $X$, Chmieli\'{n}ski \cite{Ch, Ch.2} introduced the approximate Birkhoff--James orthogonality $x\perps y$ by
$$\|x + \lambda y\|^2\geq\|x\|^2-2\varepsilon\,|\lambda|\,\|x\|\, \|y\| \qquad (\lambda\in\mathbb{C}).$$
Inspired by the above the approximate Birkhoff--James orthogonality, we propose a new type of approximate orthogonality in inner product $C^{*}$-modules.
\begin{definition}\label{df.011}
For given $\varepsilon\in[0 , 1)$ elements $x, y$ of an inner product $\A$-module $V$ are said to be approximate strongly Birkhoff–-James orthogonal, denoted by $x\perpp y$, if
$$\|x + ya\|^2\geq\|x\|^2-2\varepsilon\,\|a\|\,\|x\|\,\|y\|\qquad (a\in\A).$$
\end{definition}
In Section 3, we investigate this notion of approximate orthogonality in inner product $C^*$-modules.
In particular, we show that
$$x \perpr y \, \Longrightarrow \, x\perpp y \, \Longrightarrow \, x\perps y,\qquad (x, y \in V),$$
while the converses do not hold in general.

As a result, we show that if $T\,: V \longrightarrow W$ is a linear mapping between inner
product $\A$-modules such that $x \perpb y \Longrightarrow Tx\perpp Ty$ for all $x, y\in V$, then
$$(1 - 16\varepsilon)\,\|T\|\,\|x\| \leq \|Tx\| \leq \|T\|\,\|x\| \qquad (x\in V).$$
Some other related topics can be found in \cite{G, I.T, W, Z.M}.

\section{Operator (strong) Birkhoff–-James orthogonality}
The characterization of the (strong) Birkhoff–-James orthogonality for elements of a
Hilbert $C^*$-module by means of the states of the underlying $C^*$-algebra are known.
For elements $x, y$ of a Hilbert
$\A$-module $V$ the following results were obtained in \cite{A.R.2, B.G}:
\begin{align}\label{id.1.302}
x\perpb y \Leftrightarrow \big(\exists \,\varphi \in \mathcal{S}(\A): \varphi(\la x, x\ra)=\|x\|^2 \,\,\mbox{and}\,\, \varphi(\la x, y\ra)=0\big)
\end{align}
and
\begin{align}\label{id.1.303}
x\perpm y \Leftrightarrow \big(\exists \,\varphi \in \mathcal{S}(\A): \varphi(\la x, x\ra)=\|x\|^2 \,\,\mbox{and}\,\, \varphi(\la x,y\ra a)=0 \,\,\forall \,a\in \A\big).
\end{align}
In the following result we establish a characterization of the Birkhoff--James orthogonality for elements of a Hilbert $\K(\mathscr{H})$-module.
\begin{theorem}\label{cr.024}
Let $V$ be a Hilbert $\K(\mathscr{H})$-module and $x , y\in V$. Then the following statements are equivalent\textup{:}
\begin{itemize}
\item[(i)] $x\perpb y$.
\item[(ii)] There exists a positive operator $P\in\mathcal{C}_1(\mathscr{H})$ of trace one such that
$$\|x + \lambda y\|^2 \geq \|x\|^2 + |\lambda|^2{\rm tr}(P|y|^2) \qquad (\lambda\in\Bbb C).$$
\end{itemize}
\end{theorem}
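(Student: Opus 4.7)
The plan is to bridge the abstract state characterization~\eqref{id.1.302} of $\perpb$ with the concrete description of the dual of $\K(\mathscr{H})$: recall that $\K(\mathscr{H})^{*}$ is isometrically isomorphic to $\mathcal{C}_1(\mathscr{H})$ via $P \mapsto \mathrm{tr}(P\,\cdot)$, and under this identification the states of $\K(\mathscr{H})$ are exactly the functionals $A \mapsto \mathrm{tr}(PA)$ with $P \in \mathcal{C}_1(\mathscr{H})$ positive and of trace one (the density operators). Once this identification is in hand, the theorem is essentially a dictionary translation of~\eqref{id.1.302}.

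The implication (ii) $\Rightarrow$ (i) is immediate: both $P$ and $|y|^2 = \la y, y\ra$ are positive, so $\mathrm{tr}(P\,|y|^2) \geq 0$, and hence (ii) forces $\|x + \lambda y\|^2 \geq \|x\|^2$ for every $\lambda \in \Bbb C$, which is exactly $x \perpb y$.

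For the converse (i) $\Rightarrow$ (ii), I would invoke~\eqref{id.1.302} to choose a state $\varphi \in \mathcal{S}(\K(\mathscr{H}))$ with $\varphi(\la x, x\ra) = \|x\|^2$ and $\varphi(\la x, y\ra) = 0$, and let $P \in \mathcal{C}_1(\mathscr{H})$ be the positive trace-one density operator representing $\varphi$. Since $\varphi$ is self-adjoint on $\K(\mathscr{H})$, we also get $\varphi(\la y, x\ra) = \overline{\varphi(\la x, y\ra)} = 0$. Using that $\varphi(b) \leq \|b\|$ for every $b \geq 0$ (which holds because $\|\varphi\| = 1$ and $\varphi \geq 0$), I compute, for any $\lambda \in \Bbb C$,
\begin{align*}
\|x + \lambda y\|^2
&= \|\la x+\lambda y,\, x+\lambda y\ra\|
\geq \varphi\bigl(\la x+\lambda y,\, x+\lambda y\ra\bigr) \\
&= \varphi(\la x, x\ra) + \lambda\,\varphi(\la x, y\ra) + \bar{\lambda}\,\varphi(\la y, x\ra) + |\lambda|^2\,\varphi(\la y, y\ra) \\
&= \|x\|^2 + |\lambda|^2\,\mathrm{tr}\bigl(P\,|y|^2\bigr),
\end{align*}
which is exactly (ii). There is no real obstacle beyond invoking the density-operator representation of states of $\K(\mathscr{H})$; the rest is careful expansion together with the elementary bound $\varphi(b)\leq\|b\|$ for $b\geq 0$.
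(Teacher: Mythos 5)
Your proposal is correct and follows essentially the same route as the paper: invoke the state characterization \eqref{id.1.302}, expand $\varphi(\la x+\lambda y, x+\lambda y\ra)$ below $\|x+\lambda y\|^2$, and represent the state by a positive trace-one operator $P\in\mathcal{C}_1(\mathscr{H})$ (the paper cites \cite[Theorem 4.2.1]{mor} for this), with the converse following from $\mathrm{tr}(P|y|^2)\geq 0$. The only difference is cosmetic: you fix the density operator before the expansion rather than after.
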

\begin{proof}
Let $x\perpb y$. By (\ref{id.1.302}), there exists a state $\varphi$ over $\K(\mathscr{H})$ such that $\varphi(\la x, x\ra)=\|x\|^2$ and $\varphi(\la x, y\ra)=0.$ For every $\lambda\in\Bbb C$, we therefore have
\begin{align*}
\|x + \lambda y\|^2 & \geq \varphi\Big(\la x + \lambda y, x + \lambda y\ra\Big)
\\& = \varphi(\la x, x\ra) + \lambda\varphi(\la x, y\ra) + \overline{\lambda\varphi(\la x, y\ra)} + |\lambda|^2\varphi(\la y, y\ra)
\\& = \|x\|^2 + |\lambda|^2\varphi(|y|^2).
\end{align*}
Thus
$$\|x + \lambda y\|^2 \geq \|x\|^2 + |\lambda|^2\varphi(|y|^2) \qquad (\lambda\in\Bbb C).$$
Now, by \cite[Theorem 4.2.1]{mor}, there exists a positive operator $P\in\mathcal{C}_1(\mathscr{H})$ of trace one such that $\varphi(T) = {\rm tr}(PT)$, $T\in \K(\mathscr{H})$. Thus we have $$\|x + \lambda y\|^2 \geq \|x\|^2 + |\lambda|^2\varphi(|y|^2) = \|x\|^2 + |\lambda|^2{\rm tr}(P|y|^2) \qquad(\lambda\in\Bbb C).$$
Conversely, if (ii) holds then, since $|\lambda|^2{\rm tr}(P|y|^2)\geq 0$ for all $\lambda\in\Bbb C$, we get
$$\|x + \lambda y\| \geq \sqrt{\|x\|^2 + |\lambda|^2{\rm tr}(P|y|^2)} \geq \|x\| \qquad (\lambda\in\Bbb C).$$
Hence $x\perpb y$.
\end{proof}
\begin{remark}
Let $V$ be a Hilbert $\K(\mathscr{H})$-module and $x , y\in V$. Using the same argument as in the proof of Theorem \ref{cr.024} and (\ref{id.1.303}) we obtain
$x\perpm y$ if and only if there exists a positive operator $P\in\mathcal{C}_1(\mathscr{H})$ of trace one such that
  $$\|x + ya\|^2 \geq \|x\|^2 + {\rm tr}(P|ya|^2) \qquad (a\in \A).$$
\end{remark}
In the following result we establish a characterization of the strong Birkhoff--James orthogonality for elements of a $C^*$-algebra.
\begin{theorem}\label{cr.0212}
Let $\A$ be a $C^*$-algebra, and $a, b\in \A$. Then the following statements are equivalent\textup{:}
\begin{itemize}
\item[(i)] $a\perpm b$.
\item[(ii)] There exist a Hilbert space $\mathscr{H}$, a representation $\pi:\A\to \B(\mathscr{H})$ and a unit vector $\xi\in \mathscr{H}$ such that
$$\|a + bc\|^2 \geq \|a\|^2 + \|\pi(bc)\xi\|^2 \qquad (c\in \A).$$
\end{itemize}
\end{theorem}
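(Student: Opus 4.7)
The plan is to derive this characterization from the state-based description \eqref{id.1.303} of the strong Birkhoff--James orthogonality combined with the GNS construction applied to the relevant state.

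For the direction (i) $\Rightarrow$ (ii), I would start from \eqref{id.1.303} applied to the $C^*$-algebra $\A$ regarded as a Hilbert module over itself via $\langle x,y\rangle=x^*y$. This yields a state $\varphi\in\mathcal{S}(\A)$ with $\varphi(a^*a)=\|a\|^2$ and $\varphi(a^*bc)=0$ for every $c\in\A$. Let $(\pi_{\varphi},\mathscr{H}_{\varphi},\xi_{\varphi})$ be the GNS triple associated with $\varphi$, so that $\xi_{\varphi}$ is a unit cyclic vector and $\varphi(x)=\langle\pi_{\varphi}(x)\xi_{\varphi},\xi_{\varphi}\rangle$ for all $x\in\A$. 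Since $\|\varphi\|=1$ and $\varphi$ is positive, one has the universal inequality $\varphi(x^*x)\leq\|x\|^2$ for every $x\in\A$. Applying this to $x=a+bc$ and expanding,
\begin{align*}
\|a+bc\|^2 &\geq \varphi\bigl((a+bc)^*(a+bc)\bigr) \\
 &= \varphi(a^*a)+\varphi(a^*bc)+\overline{\varphi(a^*bc)}+\varphi(c^*b^*bc) \\
 &= \|a\|^2+\|\pi_{\varphi}(bc)\xi_{\varphi}\|^2,
\end{align*}
where the cross terms vanish by the orthogonality condition and the last identity uses $\varphi(c^*b^*bc)=\langle\pi_{\varphi}(bc)^*\pi_{\varphi}(bc)\xi_{\varphi},\xi_{\varphi}\rangle$. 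Taking $\mathscr{H}=\mathscr{H}_{\varphi}$, $\pi=\pi_{\varphi}$ and $\xi=\xi_{\varphi}$ gives (ii).

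For the converse (ii) $\Rightarrow$ (i), the inequality in (ii) immediately yields $\|a+bc\|^2\geq\|a\|^2$ for every $c\in\A$, since $\|\pi(bc)\xi\|^2\geq0$. This is precisely the definition of $a\perpm b$, so no further argument is needed.

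The only non-routine step is invoking the GNS construction to pass from the abstract state produced by \eqref{id.1.303} to a concrete representation and vector; once that bridge is set up, the rest is bookkeeping on the expansion of $(a+bc)^*(a+bc)$ and on the positivity of the residual term. I do not expect any genuine obstacle.
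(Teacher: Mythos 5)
Your proposal is correct and follows essentially the same route as the paper: both apply the state characterization \eqref{id.1.303} with $V=\A$, expand $\varphi\bigl((a+bc)^*(a+bc)\bigr)$ so the cross terms vanish, and then realize $\varphi$ as a vector state via a representation (the paper cites \cite[Proposition 2.4.4]{dix}, which is exactly the GNS construction you invoke), with the converse being immediate from positivity of $\|\pi(bc)\xi\|^2$. No gaps.
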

\begin{proof}
Suppose that $a\perpm b$. By (\ref{id.1.303}) applied to $V = \A$ and using the same argument as in the proof of Theorem \ref{cr.024}, there exists a state $\varphi$ of $\A$ such that $\|a + bd\|^2 \geq \|a\|^2 + \varphi(|bd|^2)$ for all $d\in \A$. Now, by \cite[Proposition 2.4.4]{dix} there exist a Hilbert space $\mathscr{H}$, a representation $\pi:\A\to \B(\mathscr{H})$ and a unit vector $\xi\in \mathscr{H}$ such that for any $c\in\A$ we have $\varphi(c)=[\pi(c)\xi, \xi].$ Hence
\begin{align*}
\|a + bc\|^2 &\geq \|a\|^2 + \varphi(|bc|^2) = \|a\|^2 + \big[\pi(|bc|^2)\xi, \xi\big]
\\& = \|a\|^2 + \big[\pi(bc)\xi, \pi(bc)\xi\big] = \|a\|^2 + \|\pi(bc)\xi\|^2,
\end{align*}
for all $c\in\A.$

The converse is obvious.
\end{proof}
\begin{corollary}\label{cr.0213}
Let $\A$ be a unital $C^*$-algebra with the unit $e$. For every self-adjoint
noninvertible $a\in \A$, there exist a Hilbert space $\mathscr{H}$, a representation $\pi:\A\to \B(\mathscr{H})$ and a unit vector $\xi\in \mathscr{H}$ such that
$$\|e + ab\|^2 \geq 1 + \|\pi(ab)\xi\|^2 \qquad (b\in \A).$$
\end{corollary}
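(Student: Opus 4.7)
The plan is to deduce the corollary by applying Theorem~\ref{cr.0212} to the pair $(e,a)$ (with the theorem's $a,b$ replaced by our $e,a$). Since $\|e\|=1$, the conclusion of Theorem~\ref{cr.0212} specializes to
\[
\|e+ac\|^2\ge 1+\|\pi(ac)\xi\|^2\qquad(c\in\A),
\]
which after renaming $c$ to $b$ is exactly the desired inequality. Hence the entire task reduces to verifying the one hypothesis of Theorem~\ref{cr.0212}, namely the strong orthogonality $e\perpm a$.

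To establish $e\perpm a$, I would apply the state characterization (\ref{id.1.303}) with $V=\A$ viewed as a Hilbert module over itself. The condition $\varphi(\la e,e\ra)=\varphi(e)=1=\|e\|^2$ is automatic for any state on a unital $C^*$-algebra, so what remains is to produce a state $\varphi$ with $\varphi(ac)=0$ for every $c\in\A$. Applying the Cauchy--Schwarz inequality $|\varphi(y^*x)|^2\le\varphi(y^*y)\varphi(x^*x)$ with $y=a$ and $x=c$, and using $a^*=a$, this infinite family of conditions collapses to the single requirement $\varphi(a^2)=0$.

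The remaining step exploits the hypotheses on $a$. Self-adjointness and noninvertibility force $0\in\sigma(a)$, and hence $0\in\sigma(a^2)$, so $a^2$ is a positive element whose spectrum contains $0$. A standard consequence of the Gelfand representation of the commutative $C^*$-subalgebra generated by $e$ and $a$, followed by a Hahn--Banach extension to $\A$, then supplies a state $\varphi$ with $\varphi(a^2)=0$. Together with the previous paragraph this yields $e\perpm a$, and Theorem~\ref{cr.0212} completes the proof. I do not anticipate any substantial obstacle here; the one mildly clever moment is recognizing that the self-adjointness of $a$ is exactly what lets Cauchy--Schwarz reduce the uncountable family of conditions $\{\varphi(ac)=0:c\in\A\}$ to the single scalar condition $\varphi(a^2)=0$, after which noninvertibility delivers the required state via spectral considerations.
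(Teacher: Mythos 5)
Your proposal is correct and follows essentially the same route as the paper: reduce the statement to $e\perpm a$ via the characterization (\ref{id.1.303}), use the Cauchy--Schwarz inequality together with $a^*=a$ to collapse the conditions $\varphi(ac)=0$ to $\varphi(a^2)=0$, obtain such a state from the noninvertibility of $a^2$, and then invoke Theorem~\ref{cr.0212} with $\|e\|=1$. The only difference is that you spell out (via Gelfand plus Hahn--Banach) the existence of a state vanishing at $a^2$, which the paper simply asserts.
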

\begin{proof}
Since $a$ is noninvertible, $a^2$ is noninvertible as well. Therefore there is a state $\varphi$ of $\A$ such that $\varphi(a^2) = 0$. We have $\varphi(ee^\ast) = \|e\|^2 = 1$ and
$$|\varphi(eab)|\leq \sqrt{\varphi(eaa^\ast e^\ast)\varphi(b^\ast b)} = \sqrt{\varphi(a^2)\varphi(b^\ast b)} = 0 \qquad(b \in \A).$$
Thus by (\ref{id.1.303}) we get $e\perpm a$.
Hence, by Theorem \ref{cr.0212}, there exist a Hilbert space $\mathscr{H}$, a representation $\pi:\A\to \B(\mathscr{H})$ and a unit vector $\xi\in \mathscr{H}$ such that
$\|e + ab\|^2 \geq 1 + \|\pi(ab)\xi\|^2$ for all $b\in \A$.
\end{proof}
Now, we are going to obtain some characterizations of the (strong) Birkhoff--James orthogonality in the Hilbert $C^*$-module $\B(\mathscr{H})$.
Let $T, S\in\B(\mathscr{H})$. It was proved in \cite[Theorem 1.1 and Remark 3.1]{B.S} and \cite[Proposition 2.8]{A.R.2}, that
$T\perpb S$ (resp. $T\perpm S$) if and only if there is a sequence of unit vectors $({\xi}_n)\subset \mathscr{H}$ such that
\begin{align}\label{id.1.602}
\lim_{n\rightarrow\infty} \|T{\xi}_n\| = \|T\| \,\,\mbox{and}\,\, \lim_{n\rightarrow\infty}[T{\xi}_n, S{\xi}_n] = 0 \,(\mbox{resp.}\, \lim_{n\rightarrow\infty}S^*T{\xi}_n = 0).
\end{align}
When $\mathscr{H}$ is finite dimensional, it holds that $T\perpb S$ (resp. $T\perpm S$) if and only if
there is a unit vector $\xi\in \mathscr{H}$ such that
\begin{align}\label{id.1.603}
\|T\xi\| = \|T\| \,\,\mbox{and}\,\, [T\xi, S\xi] = 0 \,(\,\mbox{resp.}\, S^*T\xi = 0).
\end{align}
The following results are immediate consequences of the above characterizations.
\begin{corollary}
Let $T\in\B(\mathscr{H})$ be an isometry and $S\in\B(\mathscr{H})$ be an invertible positive operator. Then
$T\not\perpb TS$.
\end{corollary}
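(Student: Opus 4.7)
The plan is to apply the sequential characterization (\ref{id.1.602}) of Birkhoff--James orthogonality for operators on $\mathscr{H}$ and derive a contradiction from the spectral positivity of $S$. Assume for contradiction that $T\perpb TS$. Then, by (\ref{id.1.602}), there exists a sequence of unit vectors $(\xi_n)\subset \mathscr{H}$ with $\|T\xi_n\|\to \|T\|$ and $[T\xi_n, TS\xi_n]\to 0$. Since $T$ is an isometry, the first condition carries no information (every unit vector attains the norm), so everything hinges on the second limit.

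The key simplification is to push the $T^*$ through: since $T^*T = I$, one has
\[
[T\xi_n, TS\xi_n] = [T^*T\xi_n, S\xi_n] = [\xi_n, S\xi_n].
\]
Thus the assumption $T\perpb TS$ forces $[\xi_n, S\xi_n]\to 0$ along a sequence of unit vectors.

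Now I exploit that $S$ is positive and invertible. Invertibility of a positive operator gives a strictly positive lower bound $\delta := m(S) > 0$, so that $S \geq \delta I$ in the operator order. Consequently, for every unit vector $\xi_n$,
\[
[\xi_n, S\xi_n] \geq \delta [\xi_n, \xi_n] = \delta > 0,
\]
which contradicts $[\xi_n, S\xi_n]\to 0$. Hence $T\not\perpb TS$.

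The argument is essentially a one-line calculation once the right characterization is invoked; the only thing to be careful about is that the sequential form (\ref{id.1.602}) must be used rather than the finite-dimensional version (\ref{id.1.603}), since $\mathscr{H}$ is not assumed finite dimensional. No genuine obstacle arises.
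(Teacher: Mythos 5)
Your proof is correct and is essentially the argument the paper intends: the corollary is stated there as an immediate consequence of the characterization (\ref{id.1.602}), and your reduction $[T\xi_n, TS\xi_n]=[\xi_n,S\xi_n]\geq m(S)>0$ (using $T^*T=I$ and $S\geq m(S)I$ for a positive invertible $S$) is exactly the contradiction that makes it immediate. No gaps.
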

\begin{corollary}
Let $S\in\B(\mathscr{H})$. Then the following statements are equivalent\textup{:}
\begin{itemize}
\item[(i)] $S$ is non-invertible.
\item[(ii)] $T \perpb S$ for every unitary operator $T\in\B(\mathscr{H})$.
\end{itemize}
\end{corollary}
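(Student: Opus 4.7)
The plan is to prove the two implications separately, leaning on characterization~(\ref{id.1.602}) of Birkhoff--James orthogonality in $\B(\mathscr{H})$ and, for the converse, on the polar decomposition.

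For (i) $\Rightarrow$ (ii), I would first recall the standard fact that $S \in \B(\mathscr{H})$ fails to be invertible precisely when $m(S) = 0$ or $m(S^*) = 0$ (since $S$ is invertible iff both $S$ and $S^*$ are bounded below). Fix an arbitrary unitary $T$; in either case I will exhibit a sequence of unit vectors satisfying the two limit conditions in~(\ref{id.1.602}). If $m(S) = 0$, choose unit vectors $\xi_n$ with $\|S\xi_n\| \to 0$; since $T$ is unitary, $\|T\xi_n\| = 1 = \|T\|$, while $|[T\xi_n, S\xi_n]| \leq \|S\xi_n\| \to 0$. If instead $m(S^*) = 0$, choose unit vectors $\eta_n$ with $\|S^*\eta_n\| \to 0$ and set $\xi_n := T^*\eta_n$; then $\|T\xi_n\| = \|\eta_n\| = 1 = \|T\|$ and $[T\xi_n, S\xi_n] = [\eta_n, ST^*\eta_n] = [S^*\eta_n, T^*\eta_n] \to 0$. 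By~(\ref{id.1.602}), $T \perpb S$ in both cases.

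For (ii) $\Rightarrow$ (i), I would argue by contrapositive: assuming $S$ is invertible, I must produce one unitary $T$ with $T \not\perpb S$. The natural choice is the unitary factor $U$ from the polar decomposition $S = U|S|$, which is well defined and unitary because $S$ is invertible, while $|S|$ is a positive invertible operator with $m(|S|) = m(S) > 0$. The factorisation $U + \lambda S = U(I + \lambda |S|)$ gives $\|U + \lambda S\| = \|I + \lambda |S|\|$ for every $\lambda \in \mathbb{C}$; taking $\lambda = -\varepsilon$ with $\varepsilon \in (0, 1/\|S\|)$, the operator $I - \varepsilon|S|$ is positive and the spectral mapping theorem yields $\|I - \varepsilon |S|\| = 1 - \varepsilon\, m(S) < 1 = \|U\|$. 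Hence $\|U + \lambda S\| < \|U\|$ for this $\lambda$, so $U \not\perpb S$.

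No step is a real obstacle: the forward direction is essentially a translation of non-invertibility into approximate kernel/cokernel sequences fed into~(\ref{id.1.602}), and the reverse is a one-line computation once polar decomposition is invoked. The only place requiring a bit of care is noting that $\sigma(|S|) \subset [m(S), \|S\|]$, so that $\|I - \varepsilon |S|\| = 1 - \varepsilon m(S)$ rather than something larger.
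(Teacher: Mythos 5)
Your argument is correct, but it follows a genuinely different route from the paper. The paper disposes of both implications at once by quoting Diogo's result \cite{Di}: $S$ is non-invertible if and only if $0$ lies in the closure of the set $\{\lim_n [T^*S\xi_n,\xi_n]\}$ for every unitary $T$, which, since a unitary $T$ automatically satisfies $\|T\xi_n\|=\|T\|=1$ and $[T\xi_n,S\xi_n]=\overline{[T^*S\xi_n,\xi_n]}$, translates immediately into the sequential characterization (\ref{id.1.602}) of $T\perpb S$ for every unitary $T$. You instead give a self-contained proof: for (i)$\Rightarrow$(ii) you convert non-invertibility into $m(S)=0$ or $m(S^*)=0$, feed an approximate kernel sequence of $S$ (or, after composing with $T^*$, of $S^*$) into (\ref{id.1.602}); for (ii)$\Rightarrow$(i) you take the unitary $U$ from the polar decomposition of an invertible $S$ and compute $\|U-\varepsilon S\|=\|I-\varepsilon|S|\,\|=1-\varepsilon\,m(S)<1=\|U\|$ for small $\varepsilon>0$, so $U\not\perpb S$. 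Both proofs are sound; the paper's is shorter but leans on an external numerical-range result, while yours is elementary, makes the role of the minimum modulus $m(S)$ (already central to Theorem \ref{th.026}) explicit, and exhibits a concrete unitary witnessing the failure of orthogonality in the invertible case. The only points needing the care you already noted are that invertibility is equivalent to both $S$ and $S^*$ being bounded below, and that for the positive operator $|S|$ one has $\min\sigma(|S|)=m(|S|)=m(S)$, so that $\|I-\varepsilon|S|\,\|=1-\varepsilon\,m(S)$.
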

\begin{proof}
By \cite[Proposition 3.3]{Di}, $S\in\B(\mathscr{H})$ is not invertible if and only if
$$0\in \left\{\lambda\in\Bbb C: \, \exists \,({\xi}_n)\subset \mathscr{H}, \|{\xi}_n\| = 1, \lim_{n\rightarrow\infty} [T^*S{\xi}_n, {\xi}_n]= \lambda\right\},$$
for every unitary operator $T$.
Hence, by using (\ref{id.1.602}), the statements are equivalent.
\end{proof}
\begin{corollary}\label{rem.01}
Let $T, S\in\B(\mathscr{H})$. Then the following statements hold\textup{:}
\begin{itemize}
\item[(i)] If $\dim \mathscr{H}<\infty,$ then $T\perpb S$ if and only if there
is a unit vector $\xi\in \mathscr{H}$ such that $\|T\|\xi = |T|\xi$ and $[T\xi, S\xi] = 0.$
\item[(ii)] If $\dim \mathscr{H} = \infty,$ then $T\perpb S$ if and only if there
is a sequence of unit vectors $({\xi}_n)\subset \mathscr{H}$ such that $\lim_{n\rightarrow\infty} \big(\|T\|{\xi}_n - |T|{\xi}_n\big) = 0$
and $\lim_{n\rightarrow\infty}[T{\xi}_n, S{\xi}_n]= 0.$
\item[(iii)] If $\dim \mathscr{H}<\infty,$ then $T\perpm S$ if and only if there
is a unit vector $\xi\in \mathscr{H}$ such that $\|T\|\xi = |T|\xi$ and $S^*T\xi = 0.$
\item[(iv)] If $\dim \mathscr{H} = \infty,$ then $T\perpm S$ if and only if there
is a sequence of unit vectors $({\xi}_n)\subset \mathscr{H}$ such that $\lim_{n\rightarrow\infty} \big(\|T\|{\xi}_n - |T|{\xi}_n\big) = 0$
and $\lim_{n\rightarrow\infty}S^*T{\xi}_n = 0.$
\end{itemize}
\end{corollary}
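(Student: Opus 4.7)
The plan is to deduce all four parts of Corollary \ref{rem.01} from the known characterizations (\ref{id.1.602}) and (\ref{id.1.603}) by replacing the norm-attainment conditions $\|T\xi\|=\|T\|$ (respectively $\lim_{n\to\infty}\|T\xi_n\|=\|T\|$) with the eigenvector-type conditions $\|T\|\xi=|T|\xi$ (respectively $\lim_{n\to\infty}(\|T\|\xi_n-|T|\xi_n)=0$). The heart of the matter is the following equivalence for unit vectors: $\|T\xi\|=\|T\|$ iff $|T|\xi=\|T\|\xi$, and analogously in the asymptotic form. Once this is established, the four parts of the corollary follow by direct substitution into (\ref{id.1.603}) and (\ref{id.1.602}).

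For the finite-dimensional cases (i) and (iii), I will use the elementary identity $\|T\xi\|^2=[T^*T\xi,\xi]=[|T|^2\xi,\xi]$. If $\xi$ is a unit vector with $\|T\xi\|=\|T\|$, then $[|T|^2\xi,\xi]=\|T\|^2=\||T|^2\|$, so $\xi$ is a norm-attaining vector for the positive operator $|T|^2$. By the spectral theorem in finite dimensions, $\xi$ lies in the eigenspace of $|T|^2$ for the eigenvalue $\|T\|^2$, hence $|T|\xi=\|T\|\xi$ since $|T|\geq 0$. The converse direction is immediate from $\|T\xi\|=\||T|\xi\|$. Substituting this equivalence into (\ref{id.1.603}) yields (i) and (iii).

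For the infinite-dimensional cases (ii) and (iv), I will invoke the spectral theorem for $|T|$, writing $|T|=\int_0^{\|T\|}\lambda\,dE(\lambda)$ and introducing the probability measures $\mu_n(\cdot)=[E(\cdot)\xi_n,\xi_n]$ on $[0,\|T\|]$. Then
\begin{align*}
\|T\xi_n\|^2 &= \int_0^{\|T\|}\lambda^2\,d\mu_n(\lambda), \\
\bigl\||T|\xi_n-\|T\|\xi_n\bigr\|^2 &= \int_0^{\|T\|}(\lambda-\|T\|)^2\,d\mu_n(\lambda).
\end{align*}
The upper bound $\int\lambda\,d\mu_n\leq \|T\|$ is trivial, while the support condition $\lambda\leq\|T\|$ yields $\int\lambda^2\,d\mu_n\leq\|T\|\int\lambda\,d\mu_n$, hence $\int\lambda\,d\mu_n\geq\|T\|^{-1}\int\lambda^2\,d\mu_n$. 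Combining these, $\|T\xi_n\|\to\|T\|$ forces both $\int\lambda^2\,d\mu_n\to\|T\|^2$ and $\int\lambda\,d\mu_n\to\|T\|$, and expanding $(\lambda-\|T\|)^2$ then gives $\||T|\xi_n-\|T\|\xi_n\|\to 0$. The converse is a direct estimate: $|\||T|\xi_n\|-\|T\||\leq\||T|\xi_n-\|T\|\xi_n\|$ together with $\|T\xi_n\|=\||T|\xi_n\|$. Substituting into (\ref{id.1.602}) finishes (ii) and (iv).

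The main obstacle is the infinite-dimensional equivalence, where one cannot pick a true eigenvector; the spectral-measure bookkeeping above is the mildly delicate point, but it reduces to a two-line estimate once the support of $\mu_n$ is located in $[0,\|T\|]$. All other steps are substitution.
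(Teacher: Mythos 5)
Your argument is correct and takes essentially the same route as the paper: both reduce all four parts to substituting, into the characterizations (\ref{id.1.602}) and (\ref{id.1.603}), the equivalence (exact, respectively asymptotic) between $\|T\xi\|=\|T\|$ and $|T|\xi=\|T\|\xi$ for unit vectors, and your finite-dimensional step is the same equality-case computation $[|T|^2\xi,\xi]=\|T\|^2=\||T|^2\|$ that the paper uses. The only difference is that the paper dismisses the remaining cases with ``similarly'', while you spell out a correct spectral-measure estimate for the asymptotic version (just treat the trivial case $T=0$ separately, since your estimate divides by $\|T\|$).
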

\begin{proof}
(i) Let $T\perpb S$. Take the same vector $\xi$ as in (\ref{id.1.603}). So, we have
$${\|T\xi\|}^2 = [T\xi, T\xi] = [|T|^2\xi, \xi] \leq {\||T|\|}^2 {\|\xi\|}^2 \leq {\|T\|}^2 {\|\xi\|}^2 = {\|T\xi\|}^2$$
This forces $|T|^2\xi = \|T\|^2\xi$ and thus $|T|\xi = \|T\|\xi$, as asserted.

The converse is trivial.

Using (\ref{id.1.602}) and (\ref{id.1.603}), we can similarly prove the statements (ii)-(iv).
\end{proof}
\begin{theorem}\label{Th.321}
Let $S\in\B(\mathscr{H})$. Let $\mathscr{H}_0 \neq \{0\}$ be a closed subspace of $\mathscr{H}$ and $P$ be the orthogonal projection onto $\mathscr{H}_0$. Then the following statements hold\textup{:}
\begin{itemize}
\item[(i)] If $\dim \mathscr{H}<\infty,$ then $P\perpb S$ if and only if there
is a unit vector $\xi\in \mathscr{H}_0$ such that $[S\xi, \xi] = 0.$
\item[(ii)] If $\dim \mathscr{H} = \infty,$ then $P\perpb S$ if and only if there
is a sequence of unit vectors $({\xi}_n)\subset \mathscr{H}_0$ such that $\lim_{n\rightarrow\infty}[S{\xi}_n, {\xi}_n] = 0.$
\end{itemize}
\end{theorem}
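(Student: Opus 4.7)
The plan is to reduce the theorem directly to Corollary \ref{rem.01}, exploiting the very special structure of an orthogonal projection. Two observations are central. First, since $\mathscr{H}_0 \neq \{0\}$ the operator $P$ is a nonzero projection, so $\|P\| = 1$. Second, because $P$ is self-adjoint and idempotent, $|P|^2 = P^*P = P^2 = P$, and positivity of $P$ forces $|P| = P$. Consequently the condition ``$\|P\|\xi = |P|\xi$'' appearing in Corollary \ref{rem.01} collapses to $\xi = P\xi$, which is precisely $\xi \in \mathscr{H}_0$. Moreover, for such $\xi$ we have $[P\xi, S\xi] = [\xi, S\xi] = \overline{[S\xi, \xi]}$, so $[P\xi, S\xi] = 0$ if and only if $[S\xi, \xi] = 0$.

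With these reductions in hand, statement (i) is immediate from Corollary \ref{rem.01}(i): a unit vector $\xi \in \mathscr{H}$ witnessing that characterization is exactly a unit vector $\xi \in \mathscr{H}_0$ with $[S\xi, \xi] = 0$, and conversely any such $\xi$ trivially satisfies $\|P\|\xi = |P|\xi$ and $[P\xi, S\xi] = 0$.

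For (ii), the forward direction uses Corollary \ref{rem.01}(ii) to produce a sequence of unit vectors $(\xi_n) \subset \mathscr{H}$ with $\xi_n - P\xi_n \to 0$ and $[P\xi_n, S\xi_n] \to 0$. Since $\|P\xi_n\| \to 1$, the renormalized vectors $\eta_n := P\xi_n / \|P\xi_n\|$ eventually lie in the unit sphere of $\mathscr{H}_0$, and a short triangle-inequality estimate yields $\eta_n - \xi_n \to 0$. Boundedness of $S$ then transfers the limit to $[S\eta_n, \eta_n] \to 0$, using the standard splitting
\begin{equation*}
[S\eta_n, \eta_n] - [S\xi_n, \xi_n] = [S(\eta_n - \xi_n), \eta_n] + [S\xi_n, \eta_n - \xi_n],
\end{equation*}
together with the observation that $[S\xi_n, \xi_n]$ differs from $[S\xi_n, P\xi_n]$ by a vanishing term. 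Conversely, given a sequence $(\eta_n) \subset \mathbb{S}_{\mathscr{H}_0}$ with $[S\eta_n, \eta_n] \to 0$, one has $P\eta_n = \eta_n$, hence $\|P\|\eta_n - |P|\eta_n = 0$ and $[P\eta_n, S\eta_n] = [\eta_n, S\eta_n] \to 0$; Corollary \ref{rem.01}(ii) then yields $P \perpb S$.

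The only mildly delicate step is the renormalization $\xi_n \rightsquigarrow \eta_n = P\xi_n/\|P\xi_n\|$ in the infinite-dimensional direction; but this is a routine normalization argument and I do not foresee any genuine obstacle.
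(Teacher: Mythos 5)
Your proposal is correct and follows essentially the same route as the paper: the paper applies the Bhatia--\v{S}emrl-type characterizations \eqref{id.1.602}--\eqref{id.1.603} directly (of which Corollary \ref{rem.01} is just the reformulation you invoke, with $|P|=P$ and $\|P\|=1$ making the two forms identical for a projection), decomposes the witnessing vectors along $\mathscr{H}_0\oplus\mathscr{H}_0^{\perp}$, shows the $\mathscr{H}_0^{\perp}$ parts vanish, and renormalizes $P\zeta_n$ exactly as you do, transferring $[P\zeta_n,S\zeta_n]\to 0$ to $[S\xi_n,\xi_n]\to 0$ by the same boundedness estimate. Your handling of the renormalization and of the converse directions is sound, so there is no gap.
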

\begin{proof}
(i) Let $P\perpb S$. By (\ref{id.1.603}), there is a unit vector $\zeta\in \mathscr{H}$ such that $\|P\zeta\| = \|P\| = 1$ and $[P\zeta, S\zeta] = 0.$
We have $\zeta = \xi + \eta$ where $\xi \in \mathscr{H}_0$ and $\eta \in {{\mathscr{H}}_0}^\perp$.
Since $\|\xi\| = \|P(\xi + \eta)\| = \|P\zeta\| = 1$ and $ \|\xi\|^2 + \|\eta\|^2 = 1$, so we get $\eta = 0$. Hence
$[S\xi, \xi] = [S(\xi + \eta), \xi] = [S(\xi + \eta), P(\xi + \eta)] = \overline{[P\zeta, S\zeta]}= 0.$

The converse is trivial.

(ii) Let $P\perpb S$. Take the vector sequence
$({\zeta}_n)$ of $\mathscr{H}$ as in
(\ref{id.1.602}).
We have ${\zeta}_n = {\mu}_n + {\eta}_n$ where ${\mu}_n \in \mathscr{H}_0$ and ${\eta}_n \in {{\mathscr{H}}_0}^\perp$.
Since $\lim_{n\rightarrow\infty} \|{\mu}_n\| = \lim_{n\rightarrow\infty} \|P({\mu}_n + {\eta}_n)\| = \lim_{n\rightarrow\infty} \|P{\zeta}_n\| = 1$ and $ \|{\mu}_n\|^2 + \|{\eta}_n\|^2 = 1$, so we get $\lim_{n\rightarrow\infty} \|{\eta}_n\| = 0$.
We may assume that $\|{\mu}_n\| \geq \frac{1}{2}$ for every $n \in \mathbb{N}$. Let us put ${\xi}_n = \frac{{\mu}_n}{\|{\mu}_n\|}$. We have
\begin{align*}
\Big|[S{\xi}_n, {\xi}_n]\Big| &= \frac{1}{\|{\mu}_n\|^2}\Big|[S{\mu}_n, {\mu}_n]\Big|
\\& = \frac{1}{\|{\mu}_n\|^2}\Big|[S{\zeta}_n, P{\zeta}_n] + [S{\mu}_n, {\mu}_n] - [S{\zeta}_n, P{\zeta}_n]\Big|
\\& \leq \frac{1}{\|{\mu}_n\|^2}\Big|[S{\zeta}_n, P{\zeta}_n]\Big| + \frac{1}{\|{\mu}_n\|^2} \Big|[S{\mu}_n, {\mu}_n] - [S({\mu}_n + {\eta}_n), {\mu}_n]\Big|
\\& \leq \frac{1}{\|{\mu}_n\|^2}\Big|[S{\zeta}_n, P{\zeta}_n]\Big| + \frac{1}{\|{\mu}_n\|^2}\Big|[S{\eta}_n, {\mu}_n]\Big|
\\& \leq \frac{1}{\|{\mu}_n\|^2}\Big|[S{\zeta}_n, P{\zeta}_n]\Big| + \frac{1}{\|{\mu}_n\|} \|S\|\|{\eta}_n\|
\\& \leq 4\Big|[S{\zeta}_n, P{\zeta}_n]\Big| + 2\|S\|\|{\eta}_n\|,
\end{align*}
whence $$\Big|[S{\xi}_n, {\xi}_n]\Big| \leq 4\Big|[S{\zeta}_n, P{\zeta}_n]\Big| + 2\|S\|\|{\eta}_n\|.$$
Since $\lim_{n\rightarrow\infty}[P{\zeta}_n, S{\zeta}_n] = 0$ and $\lim_{n\rightarrow\infty} \|{\eta}_n\| = 0$, from the above equality we get
$\lim_{n\rightarrow\infty}[S{\xi}_n, {\xi}_n] = 0$.

The converse is trivial.
\end{proof}
\begin{theorem}\label{th.026}
Let $T, S\in\B(\mathscr{H})$. Then the following statements are equivalent\textup{:}
\begin{itemize}
  \item[(i)] $T\perpb S$.
  \item[(ii)] $\|T + \lambda S\|^2 \geq \|T\|^2 + |\lambda|^2\,m^2(S) \qquad (\lambda\in\Bbb C),$
\end{itemize}
where $m(S)$ is the minimum modulus of $S$.
\end{theorem}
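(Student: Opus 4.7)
The implication (ii)$\Rightarrow$(i) is immediate: since $|\lambda|^2 m^2(S)\geq 0$, condition (ii) gives $\|T+\lambda S\|\geq \|T\|$ for every $\lambda\in\mathbb{C}$, which is exactly $T\perpb S$.

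For (i)$\Rightarrow$(ii), the natural plan is to invoke the Bhatia--\v{S}emrl type characterization (\ref{id.1.602}): if $T\perpb S$, there exists a sequence of unit vectors $(\xi_n)\subset\mathscr{H}$ with $\|T\xi_n\|\to\|T\|$ and $[T\xi_n,S\xi_n]\to 0$. For any fixed $\lambda\in\mathbb{C}$, I would expand
\begin{align*}
\|T+\lambda S\|^2 &\geq \|(T+\lambda S)\xi_n\|^2\\
&= \|T\xi_n\|^2 + 2\,\mathrm{Re}\bigl(\overline{\lambda}[T\xi_n,S\xi_n]\bigr) + |\lambda|^2\|S\xi_n\|^2.
\end{align*}
Since $\|S\xi_n\|\geq m(S)$ for each $n$, the last term is bounded below by $|\lambda|^2 m^2(S)$. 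Passing to the limit as $n\to\infty$, the middle term vanishes and $\|T\xi_n\|^2\to\|T\|^2$, yielding
$$\|T+\lambda S\|^2 \geq \|T\|^2 + |\lambda|^2 m^2(S).$$

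There is no serious obstacle here; the argument is a one-line computation once the sequence from (\ref{id.1.602}) is in hand. The only thing worth noting is that the bound $\|S\xi_n\|\geq m(S)$ holds uniformly in $n$, which is precisely why the infimum $m(S)$ (as opposed to some quantity depending on $\xi_n$) survives the limit. This also explains why the statement is sharp in a natural sense: the cross term disappears because of $T\perpb S$, while $|\lambda|^2 m^2(S)$ is the best uniform lower bound on $|\lambda|^2\|S\xi_n\|^2$.
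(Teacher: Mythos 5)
Your proposal is correct and follows essentially the same route as the paper: the trivial direction is handled identically, and for (i)$\Rightarrow$(ii) the paper likewise uses the sequence of unit vectors from (\ref{id.1.602}), expands $\|(T+\lambda S)\xi_n\|^2$, and passes to the limit, bounding $\|S\xi_n\|^2$ below by $m^2(S)$. The only cosmetic difference is that the paper treats the finite-dimensional case separately via (\ref{id.1.603}), whereas your uniform sequence argument covers both cases at once.
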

\begin{proof}
(i)$\Rightarrow$(ii) Let $T\perpb S$ and $\dim \mathscr{H} = \infty$. By (\ref{id.1.602}), there exists a sequence of unit vectors $({\xi}_n)\subset \mathscr{H}$ such that $\lim_{n\rightarrow\infty} \|T{\xi}_n\| = \|T\|$ and $\lim_{n\rightarrow\infty}[T{\xi}_n, S{\xi}_n] = 0.$
We have
\begin{align*}
\|T + \lambda S\|^2 &\geq \|(T + \lambda S){\xi}_n\|^2
\\& = \|T{\xi}_n\|^2 + \overline{\lambda}[T{\xi}_n, S{\xi}_n] + \lambda[S{\xi}_n, T{\xi}_n] + |\lambda|^2\|S{\xi}_n\|^2,
\end{align*}
for all $\lambda\in\Bbb C$ and $n\in\Bbb N$. Thus
$$\|T + \lambda S\|^2 \geq \|T\|^2 + |\lambda|^2\lim_{n\rightarrow\infty}\sup \|S{\xi}_n\|^2 \geq \|T\|^2 + |\lambda|^2\,m^2(S) \qquad (\lambda\in\Bbb C).$$
When $\dim \mathscr{H} < \infty$, by using (\ref{id.1.603}), we can similarly prove the statement (ii).

(ii)$\Rightarrow$(i) This implication is trivial.
\end{proof}
\begin{remark}
Notice that for $S\in\B(\mathscr{H})$ it is straightforward to show that $m(S) > 0$ if and only if $S$ is bounded below, or equivalently, $S$
is left invertible. So in the implication (i)$\Rightarrow$(ii) of Theorem \ref{th.026}, if $S$
is left invertible then $m(S) > 0$.
\end{remark}
It is well known that Pythagoras' equality does not hold in $\B(\mathscr{H})$.
The following result is a kind of Pythagorean inequality for bounded linear operators.
\begin{corollary}\label{cr.030}
Let $T, S\in\B(\mathscr{H})$ with $m(S)> 0$. Then there exists a unique $\gamma\in\Bbb C$, such that
$$\Big\|(T + \gamma S) + \lambda S \Big\|^2 \geq \Big\|T + \gamma S \Big\|^2 + |\lambda|^2m^2(S) \qquad (\lambda\in\Bbb C).$$
\end{corollary}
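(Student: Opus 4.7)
The plan is to read the claimed inequality through Theorem \ref{th.026}: the inequality
$\|(T+\gamma S) + \lambda S\|^2 \geq \|T+\gamma S\|^2 + |\lambda|^2 m^2(S)$ for all $\lambda \in \mathbb{C}$ is precisely the statement $(T+\gamma S)\perpb S$. So the corollary reduces to showing that there is a \emph{unique} $\gamma \in \mathbb{C}$ with $(T+\gamma S)\perpb S$. Equivalently, the function $f(\mu) := \|T+\mu S\|$ attains a unique minimum on $\mathbb{C}$; any minimizer $\gamma$ of $f$ will satisfy $\|T+\gamma S +\lambda S\| = f(\gamma+\lambda)\geq f(\gamma) = \|T+\gamma S\|$ for every $\lambda$, giving the Birkhoff--James orthogonality (and conversely).

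For existence I would use standard best-approximation theory in the one-dimensional subspace $\mathbb{C}S \subset \B(\mathscr{H})$. The map $\mu \mapsto \|T+\mu S\|$ is continuous on $\mathbb{C}$, and since $m(S)>0$, for any unit vector $\xi \in \mathscr{H}$ we have
\[
\|T+\mu S\| \;\geq\; \|(T+\mu S)\xi\| \;\geq\; |\mu|\,\|S\xi\| - \|T\xi\| \;\geq\; |\mu|\,m(S) - \|T\|,
\]
so $f(\mu)\to\infty$ as $|\mu|\to\infty$. Hence $f$ attains its infimum on some compact disc, producing a minimizer $\gamma$, and $(T+\gamma S)\perpb S$ follows.

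The uniqueness is where Theorem \ref{th.026} does the real work, and will be the main (but short) point of the argument. Suppose $\gamma_1, \gamma_2$ both satisfy the property. Applying Theorem \ref{th.026}(ii) to $(T+\gamma_1 S)\perpb S$ with $\lambda = \gamma_2 - \gamma_1$ yields
\[
\|T+\gamma_2 S\|^2 \;\geq\; \|T+\gamma_1 S\|^2 + |\gamma_2-\gamma_1|^2\, m^2(S),
\]
and swapping the roles of $\gamma_1$ and $\gamma_2$ gives the reverse inequality shifted by the same positive term. Adding the two inequalities forces $2|\gamma_1-\gamma_2|^2\,m^2(S) \leq 0$, and since $m(S)>0$ we conclude $\gamma_1 = \gamma_2$.

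The only potential obstacle is verifying coercivity of $f$, which is immediate from $m(S)>0$; everything else is a direct application of Theorem \ref{th.026}. I would expect the whole proof to fit in a few lines.
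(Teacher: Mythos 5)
Your proof is correct and follows essentially the same route as the paper: minimize $\mu \mapsto \|T+\mu S\|$ to get a point $\gamma$ with $(T+\gamma S)\perpb S$, invoke Theorem \ref{th.026} for the inequality, and substitute the difference of two candidate points to force uniqueness via $m(S)>0$. The only (harmless) variations are that you justify attainment of the minimum explicitly by coercivity, and your uniqueness step adds the two inequalities symmetrically instead of using the minimality of $\gamma$ as the paper does.
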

\begin{proof}
The function $\lambda \longmapsto \|T + \lambda S\|$ attains its minimum at, say, $\gamma$ (there may be of course
many such points) and hence $T + \gamma S\perpb S$.
So, by Theorem \ref{th.026}, we have
$$\Big\|(T + \gamma S) + \lambda S \Big\|^2 \geq \Big\|T + \gamma S \Big\|^2 + |\lambda|^2\,m^2(S) \qquad (\lambda\in\Bbb C).$$
Now, suppose that $\xi$ is another point satisfying the inequality
$$\Big\|(T + \xi S) + \lambda S \Big\|^2 \geq \Big\|T + \xi S \Big\|^2 + |\lambda|^2\,m^2(S) \qquad (\lambda\in\Bbb C).$$
Choose $\lambda = \gamma - \xi$ to get
\begin{align*}
\Big\|T + \gamma S \Big\|^2 &= \Big\|(T + \xi S) + (\gamma - \xi) S \Big\|^2
\\& \geq \Big\|T + \xi S \Big\|^2 + |\gamma - \xi|^2\,m^2(S)
\\& \geq \Big\|T + \gamma S \Big\|^2 + |\gamma - \xi|^2\,m^2(S).
\end{align*}
Hence $0 \geq |\gamma - \xi|^2\,m^2(S)$. Since $m^2(S)> 0$, we get $|\gamma - \xi|^2 = 0$, or equivalently, $\gamma = \xi$. This shows that $\gamma$ is unique.
\end{proof}
Let $T\in\B(\mathscr{H})$. For every $S\in\B(\mathscr{H})$, it is easy to see that if there exists a unit vector $\xi\in \mathscr{H}$ such that $\|T\|\xi = |T|\xi$ and $S^*T\xi = 0$ then $T\perpm S$. The question is under which conditions the converse is true. When the Hilbert space is finite dimensional, it follows from Corollary \ref{rem.01} (iii) that there exists a unit vector $\xi\in \mathscr{H}$ such that $\|T\|\xi = |T|\xi$ and $S^*T\xi = 0$.

The following example shows that the finite dimensionality in the statement (iii) of Corollary \ref{rem.01} is essential.
\begin{example}\label{ex.0224}
Consider operators $T, S\,:\ell^2\longrightarrow \ell^2$ defined by
$$T(\xi_1, \xi_2, \xi_3, \cdots) = (\frac{1}{2}\xi_1, \frac{2}{3}\xi_2, \frac{3}{4}\xi_3, \cdots)$$
and
$$S(\xi_1, \xi_2, \xi_3, \cdots) = (\xi_1, 0, 0, \cdots).$$
One can easily observe that $T \perpb S$ and $T^*S(\xi_1, \xi_2, \xi_3, \cdots) = \frac{1}{2}{\xi_1}^2\geq  0$. So, by (\ref{id.1.102}),
we get $T \perpm S$. But there does not exist $\xi\in\ell^2$ such that $\|T\|\xi = |T|\xi$.
\end{example}
We now settle the problem for any infinite dimensional Hilbert space.
The proof of Theorem \ref{th.0225} is a modification of one given by Paul et al. \cite[Theorem 3.1]{P.S.G}.
\begin{theorem}\label{th.0225}
Let $\dim \mathscr{H} = \infty$ and $T\in\B(\mathscr{H})$. If $\mathbb{S}_{{\mathscr{H}}_0} = \mathbb{M}_T$, where ${\mathscr{H}}_0$ is a finite dimensional
subspace of $\mathscr{H}$ and $\|T\|_{{{\mathscr{H}}_0}^\perp} = \sup\{\|T\xi\| :\, \xi\in{{\mathscr{H}}_0}^\perp,\, \|\xi\| = 1\}< \|T\|$, then for every $S\in\B(\mathscr{H})$ the following statements are equivalent\textup{:}
\begin{itemize}
\item[(i)] $T \perpm S$.
\item[(ii)] There exists a unit vector $\xi\in {\mathscr{H}}_0$ such that $\|T\xi\| = \|T\|$ and $S^*T\xi = 0$.
\item[(iii)] There exists a unit vector $\xi\in {\mathscr{H}}_0$ such that $\|T\|\xi = |T|\xi$ and $S^*T\xi = 0$.
\end{itemize}
\end{theorem}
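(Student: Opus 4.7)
The plan is to establish the cycle (i)$\Rightarrow$(ii)$\Rightarrow$(iii)$\Rightarrow$(i); the non-trivial content lies entirely in (i)$\Rightarrow$(ii), while the other two implications are essentially formal. For (iii)$\Rightarrow$(i), given $\xi$ as in (iii), I would compute, for any $A\in\B(\mathscr{H})$,
\[
\|T+SA\|^2\geq\|(T+SA)\xi\|^2 = \|T\|^2+2\operatorname{Re}[A\xi,S^*T\xi]+\|SA\xi\|^2=\|T\|^2+\|SA\xi\|^2\geq\|T\|^2.
\]
For (ii)$\Rightarrow$(iii), I would repeat the calculation of Corollary \ref{rem.01}(i): if $\|T\xi\|=\|T\|$ then $\|T\|^2=[|T|^2\xi,\xi]\leq\||T|\xi\|\leq\|T\|$, which forces $|T|^2\xi=\|T\|^2\xi$ and hence $|T|\xi=\|T\|\xi$.

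For the substantive direction (i)$\Rightarrow$(ii), I would first exploit the structural hypothesis to produce a spectral decomposition. Since $\mathbb{S}_{\mathscr{H}_0}\subseteq\mathbb{M}_T$, by the argument just recalled every unit vector in $\mathscr{H}_0$ is a $\|T\|$-eigenvector of $|T|$; linearity extends this to $|T|\mu=\|T\|\mu$ for every $\mu\in\mathscr{H}_0$. In particular $\mathscr{H}_0$ is invariant under the selfadjoint operator $|T|^2$, and hence so is $\mathscr{H}_0^\perp$; moreover $\||T|^2|_{\mathscr{H}_0^\perp}\|\leq\|T\|_{\mathscr{H}_0^\perp}^2<\|T\|^2$, so $|T|^2-\|T\|^2 I$ is bounded below on $\mathscr{H}_0^\perp$ by the strictly positive constant $\|T\|^2-\|T\|_{\mathscr{H}_0^\perp}^2$. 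This is the key structural fact of the proof.

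Next, by Corollary \ref{rem.01}(iv) applied to $T\perpm S$, I would select a sequence of unit vectors $(\xi_n)$ with $(|T|-\|T\|I)\xi_n\to 0$ and $S^*T\xi_n\to 0$, and decompose $\xi_n=\mu_n+\eta_n$ with $\mu_n\in\mathscr{H}_0$, $\eta_n\in\mathscr{H}_0^\perp$. Multiplying by the bounded commuting operator $|T|+\|T\|I$ gives $(|T|^2-\|T\|^2 I)\xi_n\to 0$; since this operator annihilates $\mu_n$ and is bounded below on $\mathscr{H}_0^\perp$, I conclude $\eta_n\to 0$ and therefore $\|\mu_n\|\to 1$. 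Using the finite dimensionality of $\mathscr{H}_0$, I pass to a subsequence so that $\mu_n/\|\mu_n\|\to\xi\in\mathbb{S}_{\mathscr{H}_0}$. From $S^*T\xi_n\to 0$ together with $\eta_n\to 0$ and boundedness of $S^*T$, I obtain $S^*T(\mu_n/\|\mu_n\|)\to 0$, and continuity yields $S^*T\xi=0$; since $\xi\in\mathbb{S}_{\mathscr{H}_0}=\mathbb{M}_T$, also $\|T\xi\|=\|T\|$, which is (ii).

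The expected obstacle is precisely the step $\eta_n\to 0$: everything else is bookkeeping, but this step rests squarely on the spectral gap hypothesis $\|T\|_{\mathscr{H}_0^\perp}<\|T\|$, without which $(|T|^2-\|T\|^2 I)|_{\mathscr{H}_0^\perp}$ need not be bounded below. Example \ref{ex.0224} shows that if this gap is absent, (i) need not imply (ii), so any cleaner argument is not to be hoped for.
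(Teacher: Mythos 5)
Your proof is correct, and although it shares the paper's overall skeleton for (i)$\Rightarrow$(ii) --- decompose the approximating unit vectors into an $\mathscr{H}_0$-part plus an ${\mathscr{H}_0}^{\perp}$-part, use compactness of the finite-dimensional sphere, and use the gap $\|T\|_{{\mathscr{H}_0}^{\perp}}<\|T\|$ to kill the ${\mathscr{H}_0}^{\perp}$-components --- the mechanism for that crucial step is genuinely different. The paper starts from the sequence in \eqref{id.1.602} (knowing only $\|T\zeta_n\|\to\|T\|$), shows for each $n$ that $T^{*}T$ maps the $\mathscr{H}_0$-component $\xi_n$ to a multiple of itself via the equality case of Cauchy--Schwarz, so the cross terms $[T^{*}T\xi_n,\eta_n]$ vanish, and then a Pythagorean limit computation yields $\lim\|T\eta_n\|^2=\|T\|^2\lim\|\eta_n\|^2$, which is incompatible with the gap unless $\|\eta_n\|\to0$. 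You instead globalize the eigenvector observation to $\mathscr{H}_0\subseteq\ker(|T|-\|T\|I)$, note that ${\mathscr{H}_0}^{\perp}$ is then invariant under $|T|^2$ and that $\|T\|^2I-|T|^2$ is bounded below there by $\|T\|^2-\|T\|_{{\mathscr{H}_0}^{\perp}}^2>0$, and start from Corollary \ref{rem.01}(iv) rather than \eqref{id.1.602}; multiplying $(\|T\|I-|T|)\xi_n\to0$ by $|T|+\|T\|I$ then gives $\|\eta_n\|\le\big(\|T\|^2-\|T\|_{{\mathscr{H}_0}^{\perp}}^2\big)^{-1}\big\|(\|T\|^2I-|T|^2)\xi_n\big\|\to0$ in one stroke. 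Your route buys an explicit quantitative bound on $\|\eta_n\|$ and avoids the paper's overstated intermediate claim that $\eta_n=0$ for every $n$ (what is true and needed is only $\|\eta_n\|\to0$, which is exactly what you prove); the price is reliance on Corollary \ref{rem.01}(iv), whose proof the paper only sketches, but which follows easily from \eqref{id.1.602} since $\|T\xi_n\|\to\|T\|$ is equivalent to $(\|T\|I-|T|)\xi_n\to0$. One typographical slip: in your (ii)$\Rightarrow$(iii) the chain should read $\|T\|^2=[|T|^2\xi,\xi]\le\||T|^2\xi\|\,\|\xi\|\le\|T\|^2$, after which equality in Cauchy--Schwarz gives $|T|^2\xi=\|T\|^2\xi$; this does not affect the argument.
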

\begin{proof}
(i)$\Rightarrow$(ii) Suppose (i) holds. By (\ref{id.1.602}), there exists a sequence of unit vectors
$\{\zeta_n\}$ in $\mathscr{H}$ such that
\begin{align}\label{id.17.981}
\lim_{n\rightarrow\infty} \|T{\zeta}_n\| = \|T\| \quad \mbox{and}\quad \lim_{n\rightarrow\infty} S^*T{\zeta}_n = 0.
\end{align}
For each $n\in\mathbb{N}$ we have
$$\zeta_n = \xi_n + \eta_n,$$
where $\xi_n \in \mathscr{H}_0$ and $\eta_n \in {\mathscr{H}_0}^\perp$.

Since $\mathscr{H}_0$ is a finite dimensional subspace and $\|\xi_n\| \leq 1$, so $\{\xi_n\}$ has a convergent subsequence
converging to some element of $\mathscr{H}_0$. Without loss of generality we assume that $\lim_{n\rightarrow\infty} \xi_n = \xi$. Since $\mathbb{S}_{{\mathscr{H}}_0} = \mathbb{M}_T$, so
\begin{align}\label{id.17.982}
\lim_{n\rightarrow\infty} \|T\xi_n\| = \|T\xi\| = \|T\|\,\|\xi\|
\end{align}
and
\begin{align}\label{id.17.983}
\lim_{n\rightarrow\infty} \|\eta_n\|^2 = \lim_{n\rightarrow\infty}( \|\zeta_n\|^2 - \|\xi_n\|^2 ) = 1 - \|\xi\|^2.
\end{align}
Now for each non-zero element $\xi_n \in \mathscr{H}_0$, by hypothesis $\frac{\xi_n}{\|\xi_n\|}\in \mathbb{S}_{{\mathscr{H}}_0} = \mathbb{M}_T$ and so $\|T\xi_n\| = \|T\|\,\|\xi_n\|$. Thus
\begin{align*}
\|T^*T\|\,\|\xi_n\|^2 = \|T\|^2\,\|\xi_n\|^2 = \|T\xi_n\|^2 = [T^*T\xi_n, \xi_n] \leq \|T^*T\xi_n\|\,\|\xi_n\| \leq \|T^*T\|.
\end{align*}
Hence $[T^*T\xi_n, \xi_n] = \|T^*T\xi_n\|\,\|\xi_n\|$. By the equality case of Cauchy--Schwarz inequality $T^*T\xi_n = \lambda_n\xi_n$ for some $\lambda_n\in\mathbb{C}$ and therefore
\begin{align}\label{id.17.985}
[T^*T\xi_n, \eta_n] = [T^*T\eta_n, \xi_n] = 0.
\end{align}
By (\ref{id.17.981}), (\ref{id.17.982}) and (\ref{id.17.985}) we have
\begin{align*}
\|T\|^2 &= \lim_{n\rightarrow\infty}\|T\zeta_n\|^2
\\& = \lim_{n\rightarrow\infty} [T^*T\zeta_n, \zeta_n]
\\& = \lim_{n\rightarrow\infty} \Big([T^*T\xi_n, \xi_n] + [T^*T\xi_n, \eta_n] + [T^*T\eta_n, \xi_n] + [T^*T\eta_n, \eta_n]\Big)
\\& = \lim_{n\rightarrow\infty} \|T\xi_n\|^2 + \lim_{n\rightarrow\infty} \|T\eta_n\|^2 = \|T\|^2\,\|\xi\|^2 + \lim_{n\rightarrow\infty} \|T\eta_n\|^2,
\end{align*}
whence by (\ref{id.17.983}) we reach
\begin{align}\label{id.17.984}
\lim_{n\rightarrow\infty} \|T\eta_n\|^2 = \|T\|^2(1 - \|\xi\|^2) = \|T\|^2\,\lim_{n\rightarrow\infty} \|\eta_n\|^2.
\end{align}
By the hypothesis $\|T\|_{{\mathscr{H}_0}^\perp} < \|T\|$ and so by (\ref{id.17.984}) there does
not exist any non-zero subsequence of $\{\|\eta_n\|\}$. So we conclude that $\eta_n = 0$ for all $n\in\mathbb{N}$. Hence (\ref{id.17.981}), (\ref{id.17.983}) imply
$$\|\xi\| = 1, \qquad \|T\xi\| = \|T\|\quad \mbox{and}\quad S^*T\xi = 0.$$
(ii)$\Rightarrow$(iii) This implication follows from the proof of Corollary \ref{rem.01}.\\
(iii)$\Rightarrow$(i) This implication is trivial.
\end{proof}
\begin{corollary}\label{cr.0231}
Let $\dim \mathscr{H} = \infty$ and $T\in\B(\mathscr{H})$. If $\mathbb{S}_{{\mathscr{H}}_0} = \mathbb{M}_T$, where ${\mathscr{H}}_0$ is a finite dimensional
subspace of $\mathscr{H}$ and $\|T\|_{{{\mathscr{H}}_0}^\perp} < \|T\|$, then there exists a unit vector $\xi\in {\mathscr{H}}_0$ such that $\|T\|\xi = |T|\xi$ and $\|T\|^2\,T^*T\xi = (T^*T)^2\xi$.
\end{corollary}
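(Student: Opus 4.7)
The plan is to realize the conclusion as an instance of Theorem~\ref{th.0225} applied to a carefully chosen companion operator $S$. The natural candidate is the one furnished by the identity~(\ref{id.1.104}), which guarantees strong orthogonality ``for free.''

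Concretely, view $\B(\mathscr{H})$ as a Hilbert $C^*$-module over itself, so that $\langle T, T\rangle = T^*T$. Applying~(\ref{id.1.104}) to $x = T$ yields
\[
T \perpm \bigl(\|T\|^2 T - T\langle T, T\rangle\bigr) = T \perpm \bigl(\|T\|^2 T - T\,T^*T\bigr).
\]
Set $S := \|T\|^2 T - T\,T^*T \in \B(\mathscr{H})$. Then $T \perpm S$.

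Now invoke Theorem~\ref{th.0225}: since $\mathbb{S}_{\mathscr{H}_0} = \mathbb{M}_T$ with $\mathscr{H}_0$ finite dimensional and $\|T\|_{\mathscr{H}_0^{\perp}} < \|T\|$, the implication (i)$\Rightarrow$(iii) gives a unit vector $\xi \in \mathscr{H}_0$ with
\[
\|T\|\xi = |T|\xi \qquad \text{and} \qquad S^*T\xi = 0.
\]
The first of these is already the first required conclusion. For the second, compute
\[
S^* = \|T\|^2 T^* - (T^*T)\,T^*,
\]
hence
\[
S^*T = \|T\|^2\, T^*T - (T^*T)^2,
\]
so that $S^*T\xi = 0$ is precisely the assertion $\|T\|^2\, T^*T\xi = (T^*T)^2 \xi$.

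There is essentially no obstacle here beyond selecting the right $S$: once one recognizes that (\ref{id.1.104}) produces a canonical operator strongly orthogonal to $T$ and that the hypotheses of Theorem~\ref{th.0225} are exactly the standing hypotheses of the corollary, the rest is a short algebraic identification. The mild subtlety is that one must use $\B(\mathscr{H})$ as a Hilbert module over itself (so that the inner product is $T^*T$ rather than a scalar) in order for (\ref{id.1.104}) to produce the correct cubic expression $\|T\|^2 T - T T^*T$ whose adjoint, applied to $T$, yields the desired spectral-type identity $\|T\|^2 T^*T\xi = (T^*T)^2\xi$.
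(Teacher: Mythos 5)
Your proposal is correct and follows essentially the same route as the paper: apply (\ref{id.1.104}) with $x = T$ in $\B(\mathscr{H})$ viewed as a Hilbert $C^*$-module over itself to get $T \perpm (\|T\|^2 T - TT^*T)$, then invoke Theorem \ref{th.0225} and compute $S^*T = \|T\|^2 T^*T - (T^*T)^2$. The algebraic identification at the end matches the paper's argument exactly.
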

\begin{proof}
By (\ref{id.1.104}), $T\perpm \big(\|T\|^2T - TT^*T\big)$. So, by Theorem \ref{th.0225},
there exists a unit vector $\xi\in {\mathscr{H}}_0$ such that $\|T\|\xi = |T|\xi$ and $\big(\|T\|^2T - TT^*T\big)^*T\xi = 0$. Thus $\|T\|^2\,T^*T\xi = (T^*T)^2\xi$.
\end{proof}
\begin{corollary}\label{cr.0226}
Let $\dim \mathscr{H} = \infty$ and let $T\in\B(\mathscr{H})$ be a nonzero positive operator. If $\mathbb{S}_{{\mathscr{H}}_0} = \mathbb{M}_T$, where ${\mathscr{H}}_0$ is a finite dimensional
subspace of $\mathscr{H}$ and $\|T\|_{{{\mathscr{H}}_0}^\perp} < \|T\|$, then for every $S\in\B(\mathscr{H})$
the following statements are equivalent\textup{:}
\begin{itemize}
\item[(i)] $T \perpm S$.
\item[(ii)] There exists a unit vector $\xi\in {\mathscr{H}}_0$ such that $T\xi = \|T\|\xi$ and $S^*\xi = 0$.
\end{itemize}
\end{corollary}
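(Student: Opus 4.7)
The plan is to deduce this corollary directly from Theorem \ref{th.0225}, exploiting the positivity hypothesis on $T$ to streamline the equivalent condition. Since $T \geq 0$, we have $|T| = T$, so condition (iii) of Theorem \ref{th.0225} becomes: there exists a unit vector $\xi \in \mathscr{H}_0$ such that $T\xi = \|T\|\xi$ and $S^{*}T\xi = 0$. It thus suffices to show that, under $T\xi = \|T\|\xi$, the condition $S^{*}T\xi = 0$ is equivalent to $S^{*}\xi = 0$.

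For (i)$\Rightarrow$(ii), I would apply Theorem \ref{th.0225} (which is applicable since all of its hypotheses on $T$, $\mathscr{H}_0$, and $\mathbb{M}_T$ are assumed) to obtain a unit vector $\xi \in \mathscr{H}_0$ with $\|T\|\xi = |T|\xi = T\xi$ and $S^{*}T\xi = 0$. Substituting $T\xi = \|T\|\xi$ gives $\|T\| S^{*}\xi = S^{*}T\xi = 0$, and because $T$ is nonzero we have $\|T\| > 0$, which forces $S^{*}\xi = 0$. This yields the desired $\xi$ for (ii).

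For (ii)$\Rightarrow$(i), I would reverse this computation: given a unit vector $\xi \in \mathscr{H}_0$ with $T\xi = \|T\|\xi$ and $S^{*}\xi = 0$, we obtain $S^{*}T\xi = \|T\| S^{*}\xi = 0$, and of course $\|T\|\xi = T\xi = |T|\xi$. Thus condition (iii) of Theorem \ref{th.0225} holds, and the theorem then gives $T \perpm S$.

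There is no real obstacle here; the entire content is packaged inside Theorem \ref{th.0225}, and the only task is the bookkeeping observation that for a nonzero positive $T$, the equations $S^{*}T\xi = 0$ and $S^{*}\xi = 0$ are interchangeable once $\xi$ is known to be an eigenvector of $T$ at $\|T\|$.
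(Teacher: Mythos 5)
Your proof is correct and follows essentially the same route as the paper: both deduce the corollary from Theorem \ref{th.0225} and the observation that, for a nonzero positive $T$, the conditions $S^{*}T\xi=0$ and $S^{*}\xi=0$ coincide once $T\xi=\|T\|\xi$. The only cosmetic difference is that you invoke condition (iii) of the theorem together with $|T|=T$, whereas the paper uses condition (ii) and the equivalence $\|T\xi\|=\|T\|\Leftrightarrow T\xi=\|T\|\xi$ for positive operators.
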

\begin{proof}
Obviously, (ii)$\Rightarrow$(i).

Suppose (i) holds. By Theorem \ref{th.0225}, there exists a unit vector $\xi\in {\mathscr{H}}_0$ such that $\|T\xi\| = \|T\|$ and $S^*T\xi = 0$.
Since $T\geq0$, $\|T\xi\| = \|T\| \Leftrightarrow T\xi = \|T\|\xi$. Therefore, $S^*T\xi = 0 \Leftrightarrow S^*\xi = 0$, as $T\neq 0$.
\end{proof}
\section{An approximate strong Birkhoff–-James orthogonality}
Recall that in an inner product $\A$-module $V$ and for $\varepsilon\in[0, 1)$, we say $x, y$ are
approximate strongly Birkhoff–-James orthogonal, in short $x\perpp y$, if
$$\|x + ya\|^2\geq\|x\|^2-2\varepsilon\,\|a\|\,\|x\|\,\|y\|\qquad (a\in\A).$$
The following proposition states some basic properties of the relation $\perpp$.
\begin{proposition}\label{pr.042}
Let $\varepsilon\in[0, \frac{1}{2})$ and $V$ be an inner product $\A$-module. Then the following statements hold for every $x, y \in V$\textup{:}
\begin{itemize}
\item[(i)] $x\perpp x \Leftrightarrow x=0$.
\item[(ii)] $x\perpp y \Rightarrow \alpha x\perpp \beta y$ for all $\alpha, \beta\in\mathbb{C}$.
\item[(iii)] $x\perpr y \Rightarrow x\perpp y$.
\item[(iv)] $x\perpp y \Rightarrow x\perps y$.
\item[(v)] $x\perpp y \Leftrightarrow x\perps ya$ for all $a\in \A$.
\end{itemize}
\end{proposition}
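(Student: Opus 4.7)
The plan is to handle the five items out of order, first tackling (ii), (iii), (iv) by direct substitution or standard $C^{*}$-algebraic arguments, then deducing (i) from (iv), and finally addressing (v), which contains the main subtlety.

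For (iii), I would expand
\[
\langle x+ya, x+ya\rangle = \langle x,x\rangle + a^{*}\langle y,x\rangle + \langle x,y\rangle a + a^{*}\langle y,y\rangle a
\]
and observe that, since $\langle x,x\rangle + a^{*}\langle y,y\rangle a \geq \langle x,x\rangle \geq 0$ in the $C^{*}$-order, its norm is at least $\|x\|^{2}$. The reverse triangle inequality, together with the elementary bound $\|a^{*}\langle y,x\rangle + \langle x,y\rangle a\|\leq 2\|a\|\,\|\langle x,y\rangle\|$ and the hypothesis $x\perpr y$, then yields (iii). Property (ii) is a direct scaling: for $\alpha\ne 0$, write $\alpha x + (\beta y)a = \alpha\bigl(x + y(\alpha^{-1}\beta a)\bigr)$, apply the $\perpp$-condition at $\alpha^{-1}\beta a$, and absorb the factor $|\alpha|^{2}$ on both sides; the cases $\alpha=0$ or $\beta=0$ are immediate.

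For (iv), I would fix an approximate unit $(e_{\lambda})$ of $\A$, substitute $a=\mu e_{\lambda}$ into the $\perpp$-condition, use $\|e_{\lambda}\|\leq 1$, and pass to the limit, invoking the norm convergence $ye_{\lambda}\to y$ in $V$ (which itself follows from expanding $\langle y-ye_{\lambda}, y-ye_{\lambda}\rangle$ and applying the defining property of an approximate unit to the positive element $\langle y,y\rangle\in\A$). Property (i) is then immediate: the converse is trivial, and applying (iv) with $y=x$ turns $x\perpp x$ into $x\perps x$, whose instance $\lambda=-1$ reads $0\geq\|x\|^{2}(1-2\varepsilon)$ and forces $x=0$ since $\varepsilon<\tfrac{1}{2}$.

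For (v), the backward direction is quick: assuming $x\perps ya$ for every $a\in\A$, set $\lambda=1$, $a=c$ and use $\|yc\|\leq\|c\|\,\|y\|$ to reach $x\perpp y$. The forward direction is where I expect the main obstacle. Substituting $c=\lambda a$ into the $\perpp$-condition yields only $\|x+\lambda ya\|^{2}\geq\|x\|^{2}-2\varepsilon|\lambda|\,\|a\|\,\|x\|\,\|y\|$, which is a weaker bound than what $x\perps ya$ demands, since $\|ya\|\leq\|a\|\,\|y\|$ in general. Closing this gap requires, for each $a$ and $\lambda$, choosing a preimage $c\in\A$ of $\lambda ya$ under $c\mapsto yc$ whose norm is optimized so that $\|c\|\,\|y\|$ reduces to $|\lambda|\,\|ya\|$; this factorization step is what I expect to be the most delicate part of the proof.
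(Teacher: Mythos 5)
Your handling of (i)--(iv) and of the backward half of (v) is essentially the paper's own proof: (iii) by expanding $\la x+ya,\,x+ya\ra$, using that $\|\la x,x\ra+\la ya,ya\ra\|\ge\|x\|^2$ and bounding the cross terms by $2\|a\|\,\|\la x,y\ra\|$; (ii) by scaling; (iv) by substituting $\lambda e_i$ for an approximate unit and letting $ye_i\to y$; and the backward half of (v) by taking $\lambda=1$ and $\|ya\|\le\|a\|\,\|y\|$. Your derivation of (i) from (iv) (take $\lambda=-1$ in $x\perps x$) is a harmless variant of the paper's direct argument, which plugs $a=-e_i$ into the $\perpp$ condition; both rest on the same approximate-unit computation.

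The genuine issue is the forward half of (v). You have located the discrepancy correctly, but the repair you propose cannot work: there is in general no preimage $c$ of $\lambda ya$ under $c\mapsto yc$ with $\|c\|\,\|y\|=|\lambda|\,\|ya\|$, and in fact the strengthened inequality you are trying to reach is false for every $\varepsilon\in(0,\tfrac{1}{2})$. Take $\A=V=\mathbb{C}^2$ with the sup norm, $0<\delta\le\varepsilon$, $x=(0,1)$, $y=(1,\delta)$, $a=(0,1)$, so $\|x\|=\|y\|=1$ and $\|ya\|=\delta$. For every $b=(b_1,b_2)$ one has $\|x+yb\|^2\ge|1+\delta b_2|^2\ge 1-2\delta|b_2|\ge 1-2\varepsilon\,\|b\|=\|x\|^2-2\varepsilon\,\|b\|\,\|x\|\,\|y\|$, so $x\perpp y$; yet with $\lambda=-1/\delta$ one gets $\|x+\lambda ya\|^2=0<1-2\varepsilon=\|x\|^2-2\varepsilon\,|\lambda|\,\|x\|\,\|ya\|$, so $x\perps ya$ fails under the literal definition of $\perps$ applied to the pair $(x,ya)$. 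What the paper actually proves is exactly your ``weaker bound'': it substitutes $\lambda ae_i$ for an approximate unit $(e_i)$ (the smoothing is superfluous, since $\lambda a\in\A$ already), obtains $\|x+\lambda ya\|^2\ge\|x\|^2-2\varepsilon\,|\lambda|\,\|a\|\,\|x\|\,\|y\|$ for all $a\in\A$ and $\lambda\in\mathbb{C}$, and reads this inequality as ``$x\perps ya$''. So under the paper's reading of (v) your direct substitution already completes the forward direction and the factorization step should not be attempted; under the literal reading of $\perps$ no proof exists, as the example above shows.
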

\begin{proof}
(i) Let $x\perpp x$. Also, suppose that $(e_i)_{i\in I}$ is an approximate unit for $\A$. We have
$$\|x -xe_i\|^2\geq\|x\|^2-2\varepsilon\,\|-e_i\|\,\|x\|\,\|x\| \qquad (i\in I).$$
Since $\lim\limits_{i}\|x -xe_i\| = 0$ and $\|e_i\|=1$, we get $(1-2\varepsilon)\|x\|^2\leq 0$. Thus $x=0$.

The converse is obvious.\\
(ii) Let $x\perpp y$ and let $\alpha, \beta\in\mathbb{C}$. Excluding the obvious case $\alpha=0$ we have
\begin{align*}
\|\alpha x + \beta ya\|^2&= |\alpha|^2\left\|x + y\frac{\beta}{\alpha}a\right\|^2
\\&\geq |\alpha|^2\left(\|x\|^2-2\varepsilon\,\|a\|\,\|x\|\,\left\|\frac{\beta}{\alpha} y\right\| \right)
\\&= \|\alpha x\|^2-2\varepsilon \,\|a\|\,\|\alpha x\|\,\|\beta y\|.
\end{align*}
Hence $\alpha x\perpp \beta y$.\\
(iii) Let $x\perp^\varepsilon y$. For any $a\in \A$ we have
\begin{align*}
\|x + ya\|^2 &= \|\langle x+ya, x+ya\rangle\|
\\& = \|\langle x, x\rangle+\langle ya, ya\rangle+\langle x, ya\rangle+\langle ya, x\rangle \|
\\&\geq \|\langle x, x\rangle+\langle ya, ya\rangle \|-\|\langle x, ya\rangle+\langle ya, x\rangle \|
\\&\geq \|\langle x, x\rangle \|-\|\langle x, ya\rangle+\langle ya, x\rangle \|
\\&\geq \|x\|^2- \|\langle x, ya\rangle \| - \|\langle ya, x\rangle \|
\\&\geq \|x\|^2- 2\,\|a\|\,\|\langle x, y\rangle \|
\\&\geq \|x\|^2- 2\varepsilon \,\|a\|\,\|x\|\,\|y\|.
\end{align*}
Thus $\|x + ya\|^2\geq \|x\|^2- 2\varepsilon\,\|a\|\, \|x\|\,\|y\|$, or equivalently, $x\perpp y$.\\
(iv) Let $x\perpp y$. Hence for any $\lambda\in\mathbb{C}$ and an approximate unit $(e_i)_{i\in I}$ for $\A$ we have
\begin{align*}
\left(\|x + \lambda y\| + |\lambda|\|ye_i-y\|\right)^2&\geq\|x + \lambda ye_i\|^2
\\& \geq\|x\|^2- 2\varepsilon \,\|\lambda e_i\|\,\|x\|\,\|y\|
\\& \geq \|x\|^2- 2\varepsilon\,|\lambda|\,\|x\|\,\|y\|.
\end{align*}
Since $\lim\limits_{i}\|ye_i-y\| = 0$, whence we get $\|x + \lambda y\|^2\geq \|x\|^2- 2\varepsilon\,|\lambda|\,\|x\|\,\|y\|$, or equivalently, $x\perp^\varepsilon y$.\\
(v) Let $x\perpp y$ and let $(e_i)_{i\in I}$ be an approximate unit for $\A$. We have
\begin{align*}
\left(\|x + \lambda ya\| + \|\lambda yae_i-\lambda ya\|\right)^2& \geq \|x + \lambda yae_i\|^2
\\& \geq \|x\|^2-2\varepsilon\,\|\lambda ae_i\|\,\|x\|\,\|y\|
\\& \geq \|x\|^2-2\varepsilon\,|\lambda|\,\|a\|\,\|x\|\,\|y\|
\end{align*}
for all $a\in \A$ and all $\lambda\in\mathbb{C}$. Since $\lim\limits_{i}\|yae_i-ya\| = 0$, we obtain from the above inequality
$$\|x + \lambda ya\|^2 \geq \|x\|^2-2\varepsilon\,|\lambda|\,\|a\|\,\|x\|\,\|y\|,$$
for all $a\in \A$ and all $\lambda\in\mathbb{C}$. Thus $x\perps ya$ for all $a\in \A$.

The converse is trivial.
\end{proof}
Proposition \ref{pr.042} shows that in an arbitrary inner product $C^{*}$-module the relation $\perpr $
is weaker than the relation $\perpp $ and this relation is weaker than the relation $\perps$, but the converses are not true in general (see the example below).
\begin{example}\label{ex.043}
Suppose that $\varepsilon\in[0 , \frac{1}{2})$. Consider $\mathbb{M}_2(\mathbb{C})$, regarded as an inner product $\mathbb{M}_2(\mathbb{C})$-module. Let
$I = \begin{bmatrix}
1 & 0 \\
0 & 1
\end{bmatrix}, A = \begin{bmatrix}
-1 & 0 \\
0 & 1
\end{bmatrix}$ and
$B = \begin{bmatrix}
1 & 0 \\
0 & 0
\end{bmatrix}$. Then
\begin{align*}
\|I + \lambda A\|^2 &= \left\|\begin{bmatrix}
1-\lambda & 0 \\
0 & 1+\lambda
\end{bmatrix}\right\|^2
\\& = \left(\max\{|1-\lambda|, |1+\lambda|\}\right)^2
\\&\geq1\geq 1-2\varepsilon\,|\lambda| = \|I\|^2- 2\varepsilon \,|\lambda|\,\|I\|\,\|A\|
\end{align*}
for all $\lambda\in\mathbb{C}$. Hence $I\perps A$. But not $I\perpp A$ since
$$\|I + A(-A)\|^2 = 0 < 1-2\varepsilon =  \|I\|^2- 2\varepsilon\,\|-A\| \|I\|\,\|A\|.$$
On the other hand, for any $C=\begin{bmatrix}
c_1 & c_2 \\
c_3 & c_4
\end{bmatrix}$ we have
\begin{align*}
\|I + BC\|^2 &= \left\|\begin{bmatrix}
1+c_1 & c_2 \\
0 & 1
\end{bmatrix}\right\|
\\& = \left[\frac{1}{2}\left(|1+c_1|^2+|c_2|^2+1\right)+ \frac{1}{2}\sqrt{\left(|1+c_1|^2+|c_2|^2+1\right)^2-4|1+c_1|^2}\right]^{\frac{1}{2}}
\\&\geq1\geq 1-2\varepsilon\,\|C\|\,\|B\| = \|I\|^2- 2\varepsilon \,\|C\|\,\|I\|\,\|B\|.
\end{align*}
Therefore $I\perpp B$. But not $I\perpr B$ since
$$\|\langle I, B\rangle\|= \|B\|=1 > \varepsilon = \varepsilon \|I\|\|B\|.$$
\end{example}
By combining Proposition \ref{pr.042} (iv) and \cite[Theorem 3.5]{M.T} we obtain the following result (see also \cite{Ch.3, F.M.P, WON}).
\begin{corollary}\label{cr.044}
Let $V, W$ be inner product $\A$-modules, $\varepsilon\in[0 , \frac{1}{2})$ and $T\,: V \longrightarrow W$
a linear mapping satisfying $x \perpb y \Longrightarrow Tx\perpp Ty$. Then
$$(1 - 16\varepsilon)\,\|T\|\,\|x\| \leq \|Tx\| \leq \|T\|\,\|x\| \qquad (x\in V).$$
\end{corollary}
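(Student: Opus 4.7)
The plan is to reduce the statement to a known stability result for linear maps between normed spaces that approximately preserve Birkhoff--James orthogonality. The upper bound $\|Tx\| \leq \|T\|\,\|x\|$ is nothing more than the definition of the operator norm (which is implicitly finite, since the conclusion involves $\|T\|$), so all the real content lies in the lower bound.

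The key observation is that Proposition \ref{pr.042}(iv) acts as a bridge between the new $C^*$-module-level relation $\perpp$ and the purely normed-space relation $\perps$: whenever $u \perpp v$ holds in an inner product $\A$-module, one automatically has $u \perps v$. Applying this pointwise to the images under $T$, the hypothesis
$$x \perpb y \ \Longrightarrow \ Tx \perpp Ty \qquad (x,y\in V)$$
is immediately weakened to
$$x \perpb y \ \Longrightarrow \ Tx \perps Ty \qquad (x,y\in V).$$
In other words, regarded simply as a linear map between the normed spaces underlying $V$ and $W$ (with the norms $\|\cdot\|$ induced by the respective $C^*$-valued inner products), $T$ approximately preserves ordinary Birkhoff--James orthogonality with the same parameter $\varepsilon$.

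At this point I would invoke \cite[Theorem 3.5]{M.T}, the Chmieli\'{n}ski-type stability theorem, which asserts that any linear map between normed spaces satisfying $x \perpb y \Longrightarrow Tx \perps Ty$ with $\varepsilon \in [0, \tfrac{1}{2})$ must obey the bound $(1 - 16\varepsilon)\,\|T\|\,\|x\| \leq \|Tx\|$ for all $x$. Combining this with the trivial upper bound produces the claimed two-sided inequality.

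The main (and essentially only) thing to notice is that no extra work is required beyond the weakening supplied by Proposition \ref{pr.042}(iv); the modular structure plays no further role, and the stability theorem of \cite{M.T} does everything else. Consequently I do not foresee any genuine obstacle in carrying out this argument.
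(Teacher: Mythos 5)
Your proposal is correct and coincides with the paper's own argument: the paper likewise derives the corollary by combining Proposition \ref{pr.042}(iv), which weakens the hypothesis to $x \perpb y \Longrightarrow Tx \perps Ty$, with the stability theorem \cite[Theorem 3.5]{M.T}. No further commentary is needed.
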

\begin{proposition}\label{pr.045}
Let $\varepsilon\in[0 , 1)$. Let $x, y$ be elements in an inner product $\A$-module $V$ such that $\langle x, x\rangle\perpp \langle x, y\rangle$, then $x\perpp y$.
\end{proposition}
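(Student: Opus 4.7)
The plan is to reduce the claim to an application of the module Cauchy--Schwarz inequality together with the hypothesis applied inside $\A$. First I would dispose of the trivial case $x=0$: then the desired inequality $\|ya\|^2\ge 0$ is automatic. From now on assume $x\neq 0$.

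Fix an arbitrary $a\in\A$. The key identity I would exploit is
\begin{equation*}
\langle x,\, x+ya\rangle \;=\; \langle x,x\rangle + \langle x,y\rangle a,
\end{equation*}
which expresses $\langle x,x\rangle+\langle x,y\rangle a$ as the inner product of two elements of $V$. The $C^*$-valued Cauchy--Schwarz inequality recalled in the preliminaries gives
\begin{equation*}
\bigl\|\langle x,\, x+ya\rangle\bigr\|^{2}\;\le\;\|x\|^{2}\,\|x+ya\|^{2}.
\end{equation*}

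Next I would apply the hypothesis $\langle x,x\rangle\perpp\langle x,y\rangle$, which is a statement about two elements of the $C^*$-algebra $\A$ viewed as a right Hilbert module over itself, with the inner product $\langle u,v\rangle=u^{*}v$. Unpacking the definition of $\perpp$ with the algebra element $a$ yields
\begin{equation*}
\bigl\|\langle x,x\rangle+\langle x,y\rangle a\bigr\|^{2}\;\ge\;\bigl\|\langle x,x\rangle\bigr\|^{2}-2\varepsilon\,\|a\|\,\bigl\|\langle x,x\rangle\bigr\|\,\bigl\|\langle x,y\rangle\bigr\|.
\end{equation*}
Substituting the identifications $\|\langle x,x\rangle\|=\|x\|^{2}$ and (again by Cauchy--Schwarz) $\|\langle x,y\rangle\|\le\|x\|\,\|y\|$ into the negative term, I obtain
\begin{equation*}
\bigl\|\langle x,x\rangle+\langle x,y\rangle a\bigr\|^{2}\;\ge\;\|x\|^{4}-2\varepsilon\,\|a\|\,\|x\|^{3}\,\|y\|.
\end{equation*}

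Chaining the two displayed inequalities through the identity above gives
\begin{equation*}
\|x\|^{2}\,\|x+ya\|^{2}\;\ge\;\|x\|^{4}-2\varepsilon\,\|a\|\,\|x\|^{3}\,\|y\|,
\end{equation*}
and dividing by $\|x\|^{2}>0$ produces exactly $\|x+ya\|^{2}\ge\|x\|^{2}-2\varepsilon\,\|a\|\,\|x\|\,\|y\|$. Since $a\in\A$ was arbitrary, this is $x\perpp y$. The only subtle step is lining up the hypothesis correctly: one must notice that $\langle x,x\rangle\perpp\langle x,y\rangle$ in $\A$ is exactly the right ingredient to bound $\|\langle x,x\rangle+\langle x,y\rangle a\|$ from below, and that the module Cauchy--Schwarz inequality is what converts such a bound into a bound on $\|x+ya\|$. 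Everything else is bookkeeping with the norm identities and the submultiplicativity $\|\langle x,y\rangle\|\le\|x\|\,\|y\|$.
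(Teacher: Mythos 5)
Your proposal is correct and follows essentially the same route as the paper: unpack the hypothesis $\langle x,x\rangle\perpp\langle x,y\rangle$ at an arbitrary $a\in\A$, rewrite $\langle x,x\rangle+\langle x,y\rangle a=\langle x,x+ya\rangle$, bound this via the $C^*$-module Cauchy--Schwarz inequality and $\|\langle x,y\rangle\|\le\|x\|\,\|y\|$ (correctly noting the sign of the term being estimated), and divide by $\|x\|^2$. No gaps; your explicit treatment of the case $x=0$ is a minor tidying the paper leaves implicit.
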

\begin{proof}
We assume that $x\neq 0$. Since $\langle x, x\rangle\perpp \langle x, y\rangle$ therefore for every $a\in\A$ we have
$$\|\langle x, x\rangle + \langle x, y\rangle a\|^2\geq \|\langle x, x\rangle\|^2-2\varepsilon\,\|a\|\,\|\langle x, x\rangle\|\|\langle x, y\rangle\|$$
or equivalently,
$$\|\langle x, x + ya\rangle\|^2\geq \|x\|^4-2\varepsilon\,\|a\|\, \|x\|^2\,\|\langle x, y\rangle \|.$$
Hence we get $$\|x\|^2\,\|x + ya\|^2\geq \|x\|^4-2\varepsilon \,\|a\|\,\|x\|^3\,\|y\| \qquad (a\in\A).$$
Since $\|x\|^2\neq0$ we obtain from the above inequality
$$\|x + ya\|^2\geq \|x\|^2-2\varepsilon\,\|a\|\,\|x\|\,\|y\|\qquad (a\in\A).$$
Thus $x\perpp y$.
\end{proof}
\begin{proposition}\label{cr.046}
Let $x, y$ be two elements in an inner product $\A$-module $V$ and let $\varepsilon\in[0 , 1)$. If there exists a state $\varphi$ on $\A$ such that $\varphi(\langle x, x\rangle) = \|x\|^2$ and $|\varphi(\langle x, y\rangle a)|\leq \varepsilon \,\|a\|\,\|x\|\,\|y\|$ for all $a\in\A$, then $x\perpp y$.
\end{proposition}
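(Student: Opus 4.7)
The plan is to mimic the state-based argument already used in the proof of Theorem \ref{cr.024}, but this time exploiting the two hypotheses on $\varphi$ to bound the cross terms in the expansion of $\langle x+ya,x+ya\rangle$ rather than killing them outright. The key observation is that for any positive element $z\in\A$ and any state $\varphi$, one has $\varphi(z)\le\|z\|$; in particular, for an arbitrary $a\in\A$,
\begin{equation*}
\|x+ya\|^{2}=\bigl\|\langle x+ya,x+ya\rangle\bigr\|\ \ge\ \varphi\bigl(\langle x+ya,x+ya\rangle\bigr).
\end{equation*}

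Next I would expand the right-hand side by linearity of the inner product and $\varphi$:
\begin{equation*}
\varphi\bigl(\langle x+ya,x+ya\rangle\bigr)=\varphi(\langle x,x\rangle)+\varphi(\langle x,y\rangle a)+\overline{\varphi(\langle x,y\rangle a)}+\varphi(\langle ya,ya\rangle).
\end{equation*}
By hypothesis the first term equals $\|x\|^{2}$, and the hypothesis $|\varphi(\langle x,y\rangle a)|\le\varepsilon\|a\|\|x\|\|y\|$ controls each of the two middle terms. The last term $\varphi(\langle ya,ya\rangle)$ is nonnegative since $\varphi$ is positive and $\langle ya,ya\rangle\ge 0$, so it can simply be discarded.

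Putting this together yields
\begin{equation*}
\|x+ya\|^{2}\ \ge\ \|x\|^{2}-2\varepsilon\|a\|\|x\|\|y\|+\varphi(\langle ya,ya\rangle)\ \ge\ \|x\|^{2}-2\varepsilon\|a\|\|x\|\|y\|,
\end{equation*}
for every $a\in\A$, which is exactly the defining inequality of $x\perpp y$. There is no real obstacle here; the only minor point to be careful about is the direction of the inequality $\varphi(z)\le\|z\|$ for positive $z$ (which is what makes $\|x+ya\|^{2}$ dominate $\varphi$ of the inner product, rather than the other way around), and the fact that $\varphi(\langle x, ya\rangle) = \varphi(\langle x, y\rangle a)$ by right $\A$-linearity of the inner product, so that the second hypothesis is directly applicable.
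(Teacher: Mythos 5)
Your proof is correct, but it follows a different route than the paper's. You bound $\|x+ya\|^{2}\geq\varphi(\langle x+ya,x+ya\rangle)$ and expand directly, using $\varphi(\langle x,x\rangle)=\|x\|^{2}$, the bound $|\varphi(\langle x,y\rangle a)|\leq\varepsilon\|a\|\|x\|\|y\|$ on the two cross terms (noting $\varphi(\langle ya,x\rangle)=\overline{\varphi(\langle x,y\rangle a)}$ since states are self-adjoint, and $\varphi(\langle x,ya\rangle)=\varphi(\langle x,y\rangle a)$ by right $\A$-linearity), and positivity to drop $\varphi(\langle ya,ya\rangle)$ --- this is exactly the expansion technique of Theorem \ref{cr.024}, transplanted to the approximate setting. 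The paper instead argues via the Cauchy--Schwarz inequality for the semi-inner product $(u,v)\mapsto\varphi(\langle u,v\rangle)$: it writes $\|x\|^{2}=\varphi(\langle x,x\rangle)\leq|\varphi(\langle x,x+ya\rangle)|+|\varphi(\langle x,ya\rangle)|\leq\|x\|\,\|x+ya\|+\varepsilon\|a\|\|x\|\|y\|$, divides by $\|x\|$ (hence its standing assumption $x\neq 0$) to get $\|x+ya\|\geq\|x\|-\varepsilon\|a\|\|y\|$, and then squares, splitting into two cases according to the sign of $\|x\|-\varepsilon\|a\|\|y\|$. Your argument buys a cleaner path: no division by $\|x\|$ (so the case $x=0$ needs no separate treatment), no case analysis, and in fact a slightly stronger inequality with the extra nonnegative term $\varphi(\langle ya,ya\rangle)$ retained, in the spirit of the Pythagorean-type estimates of Section 2; the paper's approach, on the other hand, only ever uses $\varphi$ through the scalar semi-inner product, which keeps it closer to the classical normed-space arguments for approximate Birkhoff--James orthogonality.
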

\begin{proof}
We assume that $x\neq 0$. Let $a\in\A$. By the Cauchy--Schwarz inequality, we have
\begin{align*}
\|x\|^2 &= \varphi(\langle x, x\rangle)
\\& = |\varphi(\langle x, x + ya\rangle) - \varphi(\langle x, ya\rangle)|
\\& \leq |\varphi(\langle x, x + ya\rangle)| + |\varphi(\langle x, ya\rangle)|
\\& \leq \sqrt{\varphi(\langle x, x\rangle)\varphi(\langle x + ya, x + ya\rangle)}+ \varepsilon\,\|a\|\, \|x\|\,\|y\|
\\& \leq \|x\|\,\|x + ya\| + \varepsilon\,\|a\|\,\|x\|\,\|y\|.
\end{align*}
Thus $\|x\|^2 \leq \|x\|\,\|x + ya\| + \varepsilon\,\|a\|\,\|x\|\,\|y\|$, i.e, $\|x + ya\|\geq \|x\| - \varepsilon\,\|a\|\,\|y\|$. We consider two cases:\\
If $\|x\| - \varepsilon\,\|a\|\,\|y\| \geq 0$, then we get
\begin{align*}
\|x + ya\|^2 &\geq \left(\|x\| - \varepsilon\,\|a\|\,\|y\|\right)^2
\\& = \|x\|^2 - 2\varepsilon \,\|a\|\,\|x\|\,\|y\| + {\varepsilon}^2\,\|a\|^2\,\|y\|^2
\\& \geq\|x\|^2 - 2\varepsilon \,\|a\|\,\|x\|\,\|y\|.
\end{align*}
If $\|x\| - \varepsilon\,\|a\|\,\|y\| < 0$, then we reach
\begin{align*}
\|x + ya\|^2 &\geq 0 > \|x\|\left(\|x\| - \varepsilon\,\|a\|\,\|y\|\right)
\\& \geq \|x\|\left(\|x\| - \varepsilon\,\|a\|\,\|y\|\right) - \varepsilon\,\|a\|\,\|x\|\,\|y\|
\\& = \|x\|^2 - 2\varepsilon\,\|a\|\,\|x\|\,\|y\|.
\end{align*}
Hence $x\perpp y$.
\end{proof}
\begin{proposition}\label{pr.048}
Let $x, y$ be two elements in an inner product $\A$-module $V$ and let $\varepsilon\in[0 , \frac{1}{2})$. If $x\perpp y$ then there exists a state $\varphi$ on $\A$ such that
$$| \varphi(\langle x, y\rangle a)|\leq \sqrt{2\varepsilon}\,\|a\|\, \|x\|\,\|y\| \qquad(a\in\A).$$
\end{proposition}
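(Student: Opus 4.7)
My plan is to probe the hypothesis $x\perpp y$ with the one--parameter family $a_\alpha = -\alpha\la y, x\ra$ for $\alpha>0$, choose at each $\alpha$ a state $\varphi_\alpha$ on $\A$ attaining the norm of $\la x+ya_\alpha, x+ya_\alpha\ra$, pass to a weak--$*$ limit as $\alpha\to 0^+$, and conclude by the Cauchy--Schwarz inequality for positive linear functionals. We may assume $x\neq 0$ and $y\neq 0$, otherwise the conclusion is trivial.

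First, I would expand the $\A$--valued inner product
$$\la x - \alpha y\la y, x\ra,\, x - \alpha y\la y, x\ra\ra = \la x, x\ra - 2\alpha \la x, y\ra\la y, x\ra + \alpha^2 \la x, y\ra\la y, y\ra\la y, x\ra.$$
This is a positive element of $\A$ whose norm is $\|x - \alpha y\la y, x\ra\|^2$, and by Hahn--Banach there exists a state $\varphi_\alpha$ of $\A$ attaining it. The $\perpp$ hypothesis with $a=a_\alpha$ (so that $\|a\|=\alpha\|\la y, x\ra\|$) gives, upon evaluating at $\varphi_\alpha$,
$$\varphi_\alpha(\la x, x\ra) - 2\alpha\varphi_\alpha(\la x, y\ra\la y, x\ra) + \alpha^2 \varphi_\alpha(\la x, y\ra\la y, y\ra\la y, x\ra) \geq \|x\|^2 - 2\varepsilon\alpha\|\la y, x\ra\|\|x\|\|y\|.$$
Using $\varphi_\alpha(\la x, x\ra)\le \|x\|^2$ and $\varphi_\alpha(\la x, y\ra\la y, y\ra\la y, x\ra)\le \|\la y, x\ra\|^2\|y\|^2$, I then divide by $2\alpha$ and rearrange to obtain
$$\varphi_\alpha(\la x, y\ra\la y, x\ra) \leq \varepsilon\|\la y, x\ra\|\|x\|\|y\| + \tfrac{\alpha}{2}\|\la y, x\ra\|^2\|y\|^2.$$

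Next, by Banach--Alaoglu a subsequence $\varphi_{\alpha_n}$ with $\alpha_n\to 0^+$ converges weak--$*$ to a positive linear functional $\varphi$. Since the peak identity forces $\varphi_{\alpha_n}(\la x, x\ra) \to \|x\|^2$, one gets $\varphi(\la x, x\ra)=\|x\|^2$, which combined with $\|\la x, x\ra\|=\|x\|^2$ forces $\|\varphi\|=1$, so $\varphi\in \mathcal{S}(\A)$. Passing to the limit in the inequality above and using the module Cauchy--Schwarz $\|\la y, x\ra\|\le \|x\|\|y\|$ yields $\varphi(\la x, y\ra\la y, x\ra) \leq \varepsilon\|x\|^2\|y\|^2$. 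Finally, writing $\la x, y\ra a = \la y, x\ra^* a$ and invoking the Cauchy--Schwarz inequality for the positive linear functional $\varphi$, for every $a\in\A$ we have
$$|\varphi(\la x, y\ra a)|^2 \leq \varphi(\la x, y\ra\la y, x\ra)\,\varphi(a^*a) \leq \varepsilon\|x\|^2\|y\|^2\|a\|^2,$$
which is in fact stronger than the required bound $\sqrt{2\varepsilon}\|a\|\|x\|\|y\|$.

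The step I expect to be most delicate is the identification of the weak--$*$ limit $\varphi$ as a genuine state when $\A$ is non--unital: there $\mathcal{S}(\A)$ is not weak--$*$ compact, so strictly one works inside the (compact) quasi--state space and uses the equality $\varphi(\la x, x\ra)=\|x\|^2$, enforced by the peak--state property, to exclude $\|\varphi\|<1$. The existence of the states $\varphi_\alpha$ (Hahn--Banach applied to positive elements) and the weak--$*$ continuity of evaluation at any fixed element of $\A$ are then routine.
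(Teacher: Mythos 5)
Your proof is correct, but it takes a genuinely different route from the paper's. The paper argues non-asymptotically: after normalizing $\|x\|=\|y\|=1$ it tests the definition of $\perpp$ with the single element $a=-\la y,x\ra$, obtaining $\|x-y\la y,x\ra\|^2\ge 1-2\varepsilon$, then takes a state attaining the norm of $\la x-y\la y,x\ra,\,x-y\la y,x\ra\ra$, expands, and estimates $\varphi(\la x,y\ra\la y,x\ra)\le 2\varepsilon$ before applying Cauchy--Schwarz; this is shorter and needs no limiting or compactness argument. You instead test with the scaled family $a_\alpha=-\alpha\la y,x\ra$, divide by $2\alpha$, and let $\alpha\to 0^+$; the payoff is a strictly stronger conclusion: your limiting state satisfies $\varphi(\la x,x\ra)=\|x\|^2$ (norm-attaining at $x$, which the paper's state, chosen to peak at $x-y\la y,x\ra$, need not be) and the sharper bound $\varphi(\la x,y\ra\la y,x\ra)\le\varepsilon\|x\|^2\|y\|^2$, hence the constant $\sqrt{\varepsilon}$ in place of $\sqrt{2\varepsilon}$, which makes the result a tighter converse to Proposition \ref{cr.046}. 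The algebra in your expansion and the rearrangement after dividing by $2\alpha$ are all correct, as is the handling of the non-unital case via the equality $\varphi(\la x,x\ra)=\|x\|^2=\|\la x,x\ra\|$ to rule out a limit of norm less than one. The only blemish is the appeal to a weak-$*$ convergent \emph{subsequence}: the dual unit ball is weak-$*$ compact but not sequentially compact unless $\A$ is separable, so you should pass to a subnet (or a weak-$*$ cluster point of the sequence $\varphi_{1/n}$), which changes nothing in the argument since all estimates are taken at finitely many fixed elements of $\A$.
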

\begin{proof}
Suppose that $x\perpp y$. Because of the homogeneity of relation $\perpp$ we may assume, without loss of generality, that $\|x\|=\|y\|=1$. Then, for arbitrary $a\in\A$ we have
$$\|x + ya\|^2\geq 1- 2\varepsilon\,\|a\|\, \|y\|.$$
Since $\|-\langle y, x\rangle\|\leq \|y\|\,\|x\|=1$, hence for $a = - \langle y, x\rangle \in \A$ we get
$$\|x - y\langle y, x\rangle\|^2\geq 1 - 2\varepsilon .$$
On the other side, by Theorem 3.3.6 of \cite{mor}, there is $\varphi \in \mathcal{S}(\A)$ such that
$$\varphi\Big(\Big\langle x - y\langle y, x\rangle, x - y\langle y, x\rangle \Big\rangle \Big)= \|x - y\langle y, x \rangle\|^2.$$
Also, we have
\begin{align*}
\varphi&\Big(\Big\langle x - y\langle y, x\rangle, x - y\langle y, x\rangle \Big\rangle \Big)
\\& = \varphi(\langle x, x\rangle) - 2 \varphi(\langle x, y\rangle \langle y, x\rangle) + \varphi(\langle x, y\rangle \langle y, y\rangle \langle y, x\rangle)
\\& \leq \|x\|^2 - 2 \varphi(\langle x, y\rangle \langle y, x\rangle) + \varphi(\langle x, y\rangle \|y\|^2 \langle y, x\rangle)
\\& = 1- \varphi(\langle x, y\rangle \langle y, x\rangle),
\end{align*}
so, we get
\begin{align*}
1- \varphi(\langle x, y\rangle \langle y, x\rangle) \geq \varphi\Big(\Big\langle x - y\langle y, x\rangle, x - y\langle y, x\rangle \Big\rangle \Big)= \|x - y\langle y, x \rangle\|^2 \geq 1 - 2\varepsilon.
\end{align*}
Therefore $\varphi(\langle x, y\rangle \langle y, x\rangle)\leq 2\varepsilon$. Now, by the Cauchy--Schwarz inequality, we reach
$$|\varphi(\langle x, ya\rangle)|\leq \sqrt{\varphi(\langle x, y\rangle\langle y, x\rangle)\varphi(a^\ast a)}\leq \sqrt{2\varepsilon}\|a\| \qquad (a\in\A).$$
\end{proof}

\bibliographystyle{amsplain}

\end{document}